\documentclass[12pt,article]{amsart}
\usepackage[usenames,dvipsnames]{color} 
\usepackage{amssymb}
\usepackage{graphicx}
\usepackage{amsfonts}
\usepackage{amsthm}
\usepackage{amsmath}
\usepackage{empheq}
\usepackage{tabularx}
\usepackage[usenames,svgnames,dvipsnames]{xcolor}
\usepackage{bbm}

\usepackage{subcaption}
\usepackage{caption}

\usepackage{array}
\usepackage{multirow}
\usepackage{subcaption}
\usepackage{epsfig}
\usepackage{epstopdf}

\usepackage{microtype}
\tolerance=2000
\hyphenpenalty=10000
\exhyphenpenalty=10000

	\usepackage{mathrsfs}
	\usepackage{amssymb}
	\usepackage{amsfonts}
	\usepackage{amsbsy}
	\usepackage{latexsym}
	\usepackage{amssymb,latexsym,amsmath,amsthm}
	\usepackage{xcolor}
	\usepackage{epsfig,psfrag,color}
	\usepackage[ruled,linesnumbered]{algorithm2e}
	\SetKwRepeat{Do}{do}{while}%
	\usepackage{graphicx}
	\setlength{\topmargin}{0.2in} \setlength{\textheight}{8.0in}
	\setlength{\oddsidemargin}{0.2in}
	\setlength{\evensidemargin}{0.2in} \setlength{\textwidth}{6.0in}


\theoremstyle{plain}
\newtheorem{theorem}{Theorem}[section]
\newtheorem{lemma}[theorem]{Lemma}

\theoremstyle{assumption}

\theoremstyle{definition}

\newtheorem{remark}[theorem]{Remark}

\newcommand{\Sp}{ \hspace{0.05cm} }

\newcommand{\Z}{\mathbb{Z}}
\newcommand{\C}{\mathbb{C}}






\numberwithin{equation}{section}

\begin{document}


\title [Beyond Classical Carleman Linearization 
]
{ 
Beyond 
Carleman Linearization of Nonlinear Dynamical System:  Insights from a  Case Study}

\author{Panpan Chen}
\address{Department of Mathematics, University of Central Florida, Orlando, Florida 32816}
\email{panpan.chen@ucf.edu}

\author{Nader Motee}
\address{Mechanical Engineering and Mechanics, Lehigh University,  Bethlehem, PA 18015}
\email{nam211@lehigh.edu}

\author{Qiyu Sun}
\address{Department of Mathematics, University of Central Florida, Orlando, Florida 32816}
\email{qiyu.sun@ucf.edu}

\dedicatory{In memory of Jean-Pierre Gabardo}. 

\thanks{This work was partially supported by ONR N00014-23-1-2779.}

\begin{abstract}
Nonlinear dynamical systems are widely encountered in various scientific and engineering fields.
 Despite significant advances in theoretical understanding, developing complete and integrated frameworks for analyzing and designing these systems remains challenging, which underscores the importance of efficient linearization methods. In this chapter, we
introduce a general linearization framework with emphasis on Carleman linearization and Carleman-Fourier linearization. A detailed case study on finite-section approximation to the lifted infinite-dimensional dynamical system is provided for the dynamical system with its governing function being a trigonometric polynomial of degree one.

\end{abstract}

\maketitle

\section{Introduction}
\label{introduction.section}

Nonlinear dynamical systems are ubiquitous in disciplines ranging from physics and biology to engineering. Although substantial theoretical progress has been made, holistic mathematical frameworks for the systematic analysis and design of these systems continue to exhibit significant challenges. These methodological gaps highlight the vital importance of enhancing the value of transforming nonlinear systems into linearization frameworks. Over time, 
it developed into a leading method for analyzing nonlinear systems, driven by advancements in theory, 
enhanced computational techniques, and  data explosion
\cite{Amini2022, 
Akiba2023, Belabbas2023, brockett2014early, Brunton2016, Chen2024, forets2017explicit, forets2021reachability, Gonzalez2025, harshana2025, Korda2018, KordaMezic2018, kowalski1991nonlinear,   liu2021efficient, 
moteesun2024, pruekprasert2022moment, 
steeb1980non, surana2024, Vaszary2024,  WangJungersOng2023, Wu2024}.
In this chapter, we consider lifting  
 nonlinear dynamical  systems that are described by 
\begin{equation} \label{nonlineardynamical.def0}
\frac{dx}{dt} =  g(x) \ \ {\rm  with }\ \ x(0)=x_0,
\end{equation}
where  $t\ge 0$ is the time variable,
$x:=x(t)\in {\mathbb C}$ represents the state variable, 
$g$ serves as the governing function, and $x_0\in {\mathbb C}$ acts as the initial.

\medskip

We say that a  map $\Psi$
is a {\bf lifting operator} of the nonlinear dynamical system \eqref{nonlineardynamical.def0} if the lifted system can be described as a linear dynamical system,
\begin{equation}\label{lifting.def1} \frac{d\Psi (x)}{dt} = F\big(\Psi(x)\big),\end{equation}
where $F(y)$ is an affine function about $y$.
For  engineering applications,  the lifting operator $\Psi$ should be  appropriately designed so that the original state variable
$x$  can be recovered from its lifting $\Psi(x)$ by some {\bf projection} $\Phi$,
\begin{equation} \label{lifting.def2} x=\Phi(\Psi(x)).\end{equation}
  Carleman 
linearization, Carleman-Fourier linearization and Koopman linearization are  
well-known lifting schemes,  where the resulting lifted systems are represented by linear operators on some infinite-dimensional linear space.  
In this chapter, we consider  Carleman linearization and Carleman-Fourier linearization of the dynamical system
\eqref{nonlineardynamical.def0}
with the governing function $g$
being a trigonometric polynomial, i.e.,
\begin{equation}\label{complexdynmaic.fdef} 
g(x) =\sum_{m=-M}^M g_m e^{imx}
 \end{equation}
 for some constant Fourier coefficients $g_m, -M\le m\le M$.

 \medskip 

  Carleman linearization
  has lifting and projection operators  defined by  
\begin{equation}\Psi(x)=[x, x^2, \ldots, x^N, \ldots]^T\ \ {\rm and}\ \ 
\Phi[y_1,  y_2, \ldots, y_N, \ldots]^T=y_1.\end{equation}
 It has been embraced by the control system community in a variety of successful applications, 
such as optimal nonlinear control design,  model predictive control, state estimation and feedback control.   These successes underscore the value of Carleman linearization in transforming nonlinear control problems into a linear paradigm, where powerful analytical tools can be applied. This transformation not only facilitates deeper insights into system behavior, but also enables the development of effective control strategies for nonlinear dynamics 
 \cite{amini2020approximate, amini2020quadratization, brockett2014early, forets2021reachability, hashemian2015fast,  krener1974linearization, 
 liu2021efficient, loparo1978estimating, minisini2007carleman, pruekprasert2022moment, rauh2009carleman, rotondo2022towards}.
  The key idea in Carleman linearization is to {\bf decouple}
auxiliary variables $x^n, n\ge 1$, which represent higher-order powers of the state variable $x$. The main challenge is that the state matrix of the lifted infinite-dimensional linear system is the product of a diagonal matrix with {\bf unbounded} diagonal entries and a Laurent matrix with exponential off-diagonal decay, and hence the corresponding infinite-dimensional system is {\bf not amenable} to any of the existing tools for analysis.
Carleman linearization is applicable for dynamical systems with the governing function being analytic around the origin, and it can be viewed as the dynamical system counterpart to the Maclaurin expansion for analytic functions.  Similar to the behavior of the Maclaurin expansion of an analytic function near the origin, a finite-section approximation to the Carleman linearization with higher orders offers significantly improved accuracy and a broader time range of validity compared to the conventional linearized  dynamical model,  especially when
the dynamical system has the origin as its equilibrium and the initial is not far away from the  origin.
In  Section \ref{Carleman.section},  we consider Carleman  linearization  of the  dynamical system
\eqref{nonlineardynamical.def0}
with the trigonometric polynomial governing function $g$ in \eqref{complexdynmaic.fdef},
and we present numerical demonstration for the Carleman linearization of 
the illustrative dynamical system with 
governing function
 \begin{equation}\label{simpleexample2.eq1}
g(x)= a(1-e^{ix})
\end{equation}
for some nonzero $0\ne a\in {\mathbb C}$. 
As the dilated state $x(t/|a|)$
and the reflected state
$-\Re x+i\Im x$
satisfy \eqref{simpleexample2.eq1} with the parameter $a$ replaced by $a/|a|$ and $-\Re a+i \Im a$, respectively,  we may normalize the dynamical system \eqref{simpleexample2.eq1}
so that 
\begin{equation} \label{simpleexample2.eq2a}
 a=e^{i\phi} \ \ {\rm for \ some} \  \ \phi\in [-\pi/2, \pi/2].
\end{equation}
Here and hereafter, we denote the real and imaginary  parts of a complex number $z \in \C$ by $\Re z$ and $\Im z$ respectively.
With the normalization \eqref{simpleexample2.eq2a} on the parameter $a$, one may verify that 
the dynamical system \eqref{nonlineardynamical.def0} with the governing function \eqref{simpleexample2.eq1}
has the origin as an equilibrium  and
 its solution can be  explicitly  expressed as
\begin{equation} \label{simpleexample2.eq2}
    x(t) = at + x_0 + i \ln\left(1 +(e^{ait} - 1)e^{ix_0}\right)
\end{equation}
 in a short time period,  see Appendix \ref{example.section}. Here  we use ${\rm Arg}(z)\in (-\pi, \pi]$ to denote the angle  of a nonzero complex number $z\neq 0$, and set $\ln(z)=\ln|z|+i{\rm Arg}(z)$.

 \medskip

As the  governing function $g$
in \eqref{complexdynmaic.fdef}
 exhibits periodic behavior, a natural  linearization of the dynamical system \eqref{nonlineardynamical.def0} is to adopt the Fourier system 
 $\{e^{ix}, e^{-ix}, e^{2ix}, e^{-2ix}, \ldots\}$,  
 instead of $\{x, x^2, \ldots\}$ 
 in the Carleman linearization \eqref{Carleman.eq}. However, 
    the state matrix of the associated linearization \eqref{CarlemanFourier.eq0}
does not have an upper triangular structure, which is crucial for the convergence
analysis
in the Carleman linearization; see Theorem \ref{Carleman.thm}. This highlights the critical need to reassess the linearization approach for the nonlinear dynamical system  \eqref{nonlineardynamical.def0}.
In Section \ref{CF.section}, we introduce  Carleman-Fourier linearization to the nonlinear dynamical system  \eqref{nonlineardynamical.def0}  with  governing function $g$ in \eqref{complexdynmaic.fdef}.
The Carleman-Fourier linearization in \eqref{CF.eq5}
 is based on the observation that
 the dynamical system associated with the extended state vector ${\bf x}=[x_1, x_2]^T$, {\bf decoupled} from state variables $x_1=x$ and $x_2=-x$,  has its governing field being a trigonometric polynomial with  nonnegative frequencies only.
The proposed Carleman-Fourier linearization  \eqref{CF.eq5}
 has lifting and projection operators defined by
\begin{equation}\Psi(x)= [\exp(i {\pmb \alpha}^T {\bf x})]_{\pmb \alpha\in {\mathbb Z}_{++}^2}
\ \ {\rm and}\ \
 \Phi([y_{{\pmb \alpha}}]_{\pmb \alpha\in {\mathbb Z}_{++}^2}) = -i \ln y_{[1, 0]}, \end{equation}
where ${\bf x}=[x, -x]^T$ and
 $\Z_{++}^2$ is the set of all nonzero pairs of nonnegative integers.
The linearization proposed in \eqref{CF.eq5} inherently preserves the structure of the triangular matrix of the upper block, allowing mathematically tractable and rigorous convergence analysis.
Furthermore, it is  well-adapted to dynamical systems governed by periodic vector functions, leading to a more structured and efficient embedding of periodic nonlinear dynamics within a linear framework; see Section
\ref{finitesectionapproximation.section}.

For the case that the governing function $g$ in \eqref{complexdynmaic.fdef} has a nonnegative frequency only, i.e.,  \eqref{assumption0.nonnegative} holds, 
the Carleman-Fourier linearization in \eqref{CF.eq5} can be reduced to a scaled form for each block. In particular,  we may employ the following lifting and projection operators:
\begin{equation}\Psi(x)= [\exp(ix), \exp(2ix), \ldots,]^T
\ \ {\rm and}\ \
 \Phi([z_1, z_2, \ldots]^T) = -i \ln z_{1}. \end{equation}
 Since both the lifted vector and state matrix in the concise  formulation  \eqref{CF.eq7} are subsets of their counterparts in the Carleman-Fourier linearization described in \eqref{CF.eq5}, we maintain using the terminology of Carleman-Fourier linearization for the resulting system \eqref{CF.eq7}.
 For our case study with the governing function  in  \eqref{simpleexample2.eq1},
we observe that  the state matrix in  the corresponding Carleman-Fourier linearization exhibits an upper-triangular structure with bandwidth one, see \eqref{CF.eq8}.

The state matrix ${\bf B}$ in the proposed Carleman-Fourier linearization \eqref{CF.eq5} does not constitute a bounded operator in
$\ell^2({\mathbb Z}_{++}^2)$ which consists of all square-summable sequences on ${\mathbb Z}_{++}^2$.
  This lack of boundedness prevents the direct application of the standard theories for Hilbert space dynamical systems to analyze the lifted system and, by extension, the original nonlinear dynamical system.
 An alternation to explore the infinite-dimensional lifted system
 \eqref{CF.eq5}
is its finite-section approximation, see
\eqref{CF.eq41}.
In Theorem \ref{maintheoremanalytic.thm1}, we show that
the first block 
in the finite-section approximation \eqref{CF.eq41} converges to the exponential  
of the extended state vector over some time range.
For the governing function in \eqref{simpleexample2.eq1}, we derive an explicit solution of the dynamical system associated with the corresponding finite-section approximation; see \eqref{simpleexample2.eq5-}. As a sequence, the first component in the finite-section approximation is essentially
the Maclaurin polynomial of the exponential of the original state
in \eqref{simpleexample2.eq2},
and hence we have the explicit formula for
its exponential convergence rate and time range; see \eqref{simpleexample2.eq5+}
and \eqref{maximaltimerage.orderone}.

\smallskip

This chapter is organized as follows. 
In Section \ref{Carleman.section}, we present the Carleman linearization framework for the nonlinear dynamical system \eqref{nonlineardynamical.def0} with the governing function $g$ given in \eqref{complexdynmaic.fdef}, see \eqref{Carleman.eq}. Also we numerically demonstrate the exponential convergence of finite-section approximations to
the Carleman linearization  when the governing function $g$ is specified in \eqref{simpleexample2.eq1}, see Theorem \ref{Carleman.thm} and Figures \ref{classicalcarleman} and \ref{classicalcarleman.nonzero}.
In Section \ref{CF.section}, we introduce the Carleman-Fourier linearization with extended variables method, see \eqref{CF.eq5}. This approach removes exponential terms with negative frequencies from the original nonlinear system, and transforms it into an infinite-dimensional linear system whose state matrix has a block-upper triangular structure.  A concise version of Carleman-Fourier linearization is also discussed when the governing function has nonnegative frequencies only, see \eqref{CF.eq7}.
In Section \ref{finitesectionapproximation.section}, we consider the finite-section approximation of the Carleman-Fourier system \eqref{CF.eq5}, providing a computationally tractable finite-dimensional alternative and an effective method for approximating nonlinear dynamical systems. We show that the first block in the finite-section approximation has exponential convergence to 
 the exponential of the state variable in the original dynamical system  over a specified time range.  
To illustrate the Carleman-Fourier linearization, in Section \ref{case.section} we derive an exact time range and convergence rate for the finite-section approximation with the governing function in \eqref{simpleexample2.eq1}. 
Our numerical demonstrations indicate that the Carleman-Fourier linearization outperforms standard Carleman linearization when the imaginary part of the initial state takes a large value,
while Carleman linearization remains superior for systems with initial states near the origin.

\section{Carleman Linearization}
\label{Carleman.section}

In this section, we consider Carleman linearization of the nonlinear dynamical system described in \eqref{nonlineardynamical.def0} with the governing function $g$ given in \eqref{complexdynmaic.fdef}, and  discuss the exponential convergence of the first component
in the finite-section approximation to the state variable of the original dynamical system \eqref{nonlineardynamical.def0}, see
\eqref{Carleman.eq} and Theorem \ref{Carleman.thm}. 
  The reader may refer to \cite{abdia2023, Amini2022, amini2021error, forets2017explicit, kowalski1991nonlinear, liu2021efficient, steeb1980non, Wu2024} and references therein for 
the Carleman linearization of nonlinear dynamical systems with the governing vector fields being analytic around the origin.

The key idea in Carleman linearization is to decouple
auxiliary variables $x^n, n\ge 1$, that represent higher-order powers of the state variable $x$. 
With Maclaurin expansion 
\begin{equation}\label{g.Taylor}
g(x) = \sum_{m=-M}^M g_m \sum_{n=0}^\infty 
\frac{(imx)^n}{n!}=:
\sum_{n=0}^\infty c_n x^n 
\end{equation}
for  the governing trigonometric polynomial  $g$ 
in \eqref{complexdynmaic.fdef}, 
we obtain from \eqref{nonlineardynamical.def0} that
\begin{equation}
\frac{dx^n}{dt}= n x^{n-1} \frac{dx}{dt}=  n x^{n-1} g(x)= n\sum_{n'=n-1}^\infty c_{n'-n+1}
x^{n'}, \ \ n\ge 1. 
\end{equation}
Grouping the above ODEs together yields
  the following infinite-dimensional linear system of ODEs, 
 \begin{equation}\label{Carleman.eq}
\frac{d\mathbf{x}}{dt} = \mathbf{A}\mathbf{x} + \mathbf{a},
\end{equation}
where \(\mathbf{x} = [x, x^{2}, \ldots, x^{N}, \ldots]^{\mathsf{T}}\), \(\mathbf{a} = [c_{0}, 0, \ldots, 0, \ldots]^{\mathsf{T}}\), and the state matrix \(\mathbf{A} = [n c_{n'-n+1}]_{n,n'=1}^{\infty}\) is given by
\begin{equation}\label{Carlemanmatrix.def-A}
\mathbf{A}   = 
\begin{bmatrix}
 c_{1} &  c_{2} & \cdots & c_{N-1} & c_{N} & \cdots \\[6pt]
 2c_{0} & 2c_{1} & \cdots & 2c_{N-2} & 2c_{N-1} & \cdots \\[6pt]
     & 3c_0 & \cdots & 3c_{N-3}& 3c_{N-2} & \cdots \\[6pt]
       &       & \ddots & \vdots & \vdots & \ddots \\[6pt]
        &        &      & N c_{0} & N c_{1} & \ddots \\[6pt]
        &        &        &         & \ddots  & \ddots
\end{bmatrix}.
\end{equation} 
 We call    the  infinite-dimensional dynamical system \eqref {Carleman.eq} obtained from decoupling $x^n, n\ge 1$, as 
 {\bf  Carleman linearization}  of the nonlinear dynamical system
 \eqref{nonlineardynamical.def0}.

The state matrix ${\bf A}$ in the Carleman linearization \eqref{Carleman.eq} is
the product of a diagonal matrix with unbounded entries and a Laurent matrix with exponential off-diagonal decay,
\begin{equation*} 
\mathbf{A}
 = 
\begin{bmatrix}
 1 & & & && &    \\[6pt]
  & 2 & & && &  \\[6pt]
        & &  3 & & & &       \\[6pt]
& & & \ddots & & & \\
&        &      &  & N & &    \\[6pt]
        &        &        &         &  & \ddots
\end{bmatrix}
 \begin{bmatrix}
 c_{1} &  c_{2} & \cdots & c_{N-1} & c_{N} & \cdots \\[6pt]
 c_{0} & c_{1} & \cdots & c_{N-2} & c_{N-1} & \cdots \\[6pt]
     & c_0 & \cdots & c_{N-3}& c_{N-2} & \cdots \\[6pt]
       &       & \ddots & \vdots & \vdots & \ddots \\[6pt]
        &        &      &  c_{0} &  c_{1} & \ddots \\[6pt]
        &        &        &         & \ddots  & \ddots
\end{bmatrix}.
\end{equation*} It is an upper-triangular matrix when
the origin is an equilibrium of the nonlinear dynamical system
 \eqref{nonlineardynamical.def0}, i.e., 
\begin{equation}\label{origin.requirement}
    g(0)=c_0=0.
\end{equation}
 By exploiting the upper triangular structure of the state matrix ${\bf A}$ and building on techniques developed in \cite{Amini2022}, we  may use the following {\bf finite-section approximation} framework
   to  approximate the infinite-dimensional linear dynamical system  \eqref{Carleman.eq},
 \begin{equation}\label{finitesection.Carleman}
 \frac{d {\bf x}_N}{dt}
= {\bf A}_N {\bf x}_N+{\bf a}_N
\end{equation}
 with the initial condition $ {\bf x}_N(0)= [x_0, x_0^2, \ldots, x_0^N]^{\mathsf T}$, 
 where $ {\bf x}_N = [x_{1, N}, \ldots, x_{N, N}]^{\mathsf{T}} $,  ${\bf a}_N=[c_0, \ldots, 0]^T$, and the state matrix 
 $$ {\bf A}_N=
\begin{bmatrix}
 c_{1} &  c_{2} & \cdots & c_{N-1} & c_{N}  \\[6pt]
 2c_{0} & 2c_{1} & \cdots & 2c_{N-2} & 2c_{N-1}  \\[6pt]
     & 3c_0 & \cdots & 3c_{N-3}& 3c_{N-2}  \\[6pt]
       &       & \ddots & \vdots & \vdots \\[6pt]
        &        &      & N c_{0} & N c_{1}
\end{bmatrix}=
\begin{bmatrix}
 1 &     \\
 & 2 &    \\[6pt]
       &        & \ddots &  \\[6pt]
       &        &      & N 
\end{bmatrix}
\begin{bmatrix}
 c_{1} &  c_{2} & \cdots & c_{N-1} & c_{N}  \\[6pt]
 c_{0} & c_{1} & \cdots & c_{N-2} & c_{N-1}  \\[6pt]
     & c_0 & \cdots & c_{N-3}& c_{N-2}  \\[6pt]
       &       & \ddots & \vdots & \vdots \\[6pt]
        &        &      &  c_{0} & c_{1}
\end{bmatrix}
$$
  is the leading $ N \times N $ principal submatrix of the state matrix $ {\bf A} $
  of the Carleman linearization.
  Unlike the infinite-dimensional system setting, 
   we observe that $x_{k,N}, 1\le k\le N$
in the finite-dimensional system above does not {\bf not} satisfy the coupling property $x_{k,N}= (x_{1, N})^k, 1\le k\le N$.  
We also notice that the finite-dimensional dynamical system 
\eqref{finitesection.Carleman}
with $N=1$
is the conventional linearized dynamical model at the origin,
\begin{equation}
\frac{dx_1}{dt}= c_0+c_1 x_1=  g(0)+g'(0) x_1.
    \end{equation}
In this regard, we may view the Carleman linearization and its finite-section approximation
as an {\bf analogue} of the Maclaurin expansion
for an analytic function
and its Macluarin polynomial approximation in the context of dynamical systems.

Stated in the following theorem is that the first component $ {x}_{1,N}(t)$ for $N\ge 1$, in the finite-section approximation 
\eqref{finitesection.Carleman}
 provides an exponential approximation to the solution $x(t)$ of the original dynamical system \eqref{nonlineardynamical.def0}
 in a short time range, provided that
the origin is an equilibrium of the dynamical system, i.e.,  \eqref{origin.requirement} holds.

\begin{theorem}\label{Carleman.thm}
Consider the dynamical systems 
\begin{equation} \label{Carleman.thm.eq0} 
\frac{dx}{dt} =f(x)\end{equation}
with the governing function $f$ being an analytic function  with Maclaurin expansion
$$f(x)=\sum_{n=1}^\infty c_n x^n$$
and Maclaurin coefficients satisfying
\begin{equation}\label{Carleman.thm.eq1}
|c_n|\le C_0 \frac{R_0^{n-1}}{n!}, \ \ n\ge 1,
\end{equation}
for some positive constants $C_0$ and $R_0$,
and let $x_{1, N}$
be the first component of the state vector in the finite-section approximation of order $N$ to its Carleman linearization
\eqref{Carleman.eq}.
Then 
\begin{equation}\label{Carleman.thm.eq2}
|x_{1, N}(t)-x(t)| \le 
\frac{ \tilde R_0 e^{\tilde R_0}}{\sqrt{2\pi} R_0} N^{-3/2}
\Big(\frac{ R_0|x_0| e}{\tilde R_0} e^{C_0  (1+ 1/\tilde R_0) e^{{\tilde R}_0} t } \Big)^N
\end{equation}
holds for all $ 0 \le t \le  T^* $ and $ N \ge 1 $, 
where 
\begin{equation} \label{tildeR0.def}
\tilde R_0=\max(1,  R_0|x_0|e^2)
\end{equation} 
and
\begin{equation}\label{Tstar.def}
T^*=     \frac{\tilde R_0}{C_0(\tilde R_0+1)e^{\tilde R_0} } \min\Big(\ln \frac{\tilde R_0}{eR_0|x_0|}, 2\Big).
\end{equation}
\end{theorem}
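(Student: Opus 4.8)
The plan rests on a structural simplification coming from the upper-triangularity of the Carleman matrix, which holds here because $f(x)=\sum_{n\ge1}c_nx^n$ has no constant term (so $c_0=0$ and the origin is an equilibrium). Writing $a_n(t):=(e^{t\mathbf A})_{1,n}$, I first note that for an upper-triangular matrix every nonzero contribution to $(\mathbf A^k)_{1,n}$ comes from a non-decreasing index path $1=i_0\le i_1\le\cdots\le i_k=n$, all of whose indices lie in $\{1,\dots,n\}$. Consequently, for every $n\le N$ these path sums are unchanged if $\mathbf A$ is replaced by its $N\times N$ principal block $\mathbf A_N$, so $(e^{t\mathbf A})_{1,n}=(e^{t\mathbf A_N})_{1,n}$ for all $n\le N$. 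Since $\mathbf x_N(0)=[x_0,\dots,x_0^N]^{\mathsf T}$, the first component of the finite section \eqref{finitesection.Carleman} is exactly $x_{1,N}(t)=\sum_{n=1}^N a_n(t)\,x_0^n$, the degree-$N$ truncation of the series $x(t)=\sum_{n\ge1}a_n(t)\,x_0^n$ that represents the true solution. Hence the whole theorem reduces to estimating the tail $x(t)-x_{1,N}(t)=\sum_{n>N}a_n(t)\,x_0^n$.

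The coefficients $a_n(t)$ admit a clean interpretation: letting $x(t;z)$ denote the solution flow of \eqref{Carleman.thm.eq0} with initial value $x(0;z)=z$, the first Carleman component with initial data $[z,z^2,\dots]^{\mathsf T}$ equals $x(t;z)$, so $a_n(t)=\frac1{n!}\partial_z^n x(t;z)|_{z=0}$ are the Taylor coefficients of the flow in its initial value. The first real task is therefore to show that $z\mapsto x(t;z)$ is analytic on a disk of radius $r(t)$ with $|x_0|<r(t)$ on the whole interval $[0,T^*]$, which legitimizes both the series for $x(t)$ and its tail. This I would get from the majorant estimate \eqref{Carleman.thm.eq1}, which gives $|f(x)|\le\frac{C_0}{R_0}(e^{R_0|x|}-1)\le C_0|x|e^{R_0|x|}$ and hence $\frac{d}{dt}|x|\le C_0|x|e^{R_0|x|}$; a continuity/bootstrap argument on the maximal interval where $R_0|x(t)|\le\tilde R_0$ yields $|x(t)|\le|x_0|\exp(\gamma t)$ with $\gamma=C_0(1+1/\tilde R_0)e^{\tilde R_0}$. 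The definitions \eqref{tildeR0.def} of $\tilde R_0$ and \eqref{Tstar.def} of $T^*$ are precisely calibrated so that on $[0,T^*]$ one has $R_0|x(t)|\le\tilde R_0/e$, which both closes the bootstrap and keeps the geometric base $\frac{R_0|x_0|e}{\tilde R_0}e^{\gamma t}$ at most $1$.

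To bound the tail I would estimate the coefficients $a_n(t)$ by comparison with the unique extremal case in which \eqref{Carleman.thm.eq1} is saturated, namely the flow of $\dot X=\frac{C_0}{R_0}(e^{R_0X}-1)$; this is, up to the normalization \eqref{simpleexample2.eq2a}, exactly the solvable case study \eqref{simpleexample2.eq1}, whose explicit solution \eqref{simpleexample2.eq2} makes its Taylor coefficients in the initial value computable in closed form. By the method of majorants, $|a_n(t)|$ is dominated by the corresponding extremal coefficient $A_n(t)$, and the dominant complex singularity of the extremal flow in the initial-value plane is a square-root fold (where $\partial_zX(t;z)=0$); standard Cauchy/singularity estimates for such a fold give $A_n(t)\lesssim n^{-3/2}r(t)^{-n}$ with $r(t)^{-1}$ comparable to $\frac{R_0 e}{\tilde R_0}e^{\gamma t}$. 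Summing the geometric-times-$n^{-3/2}$ tail $\sum_{n>N}A_n(t)|x_0|^n$ then reassembles into the right-hand side of \eqref{Carleman.thm.eq2}, the factor $\frac1{\sqrt{2\pi}}N^{-3/2}$ and the prefactor $\frac{\tilde R_0 e^{\tilde R_0}}{R_0}$ coming from the Stirling constants in the fold estimate and from $r(t)^{-1}$.

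The main obstacle is making the coefficient estimate $|a_n(t)|\lesssim n^{-3/2}r(t)^{-n}$ rigorous with the stated sharp constants, rather than merely asymptotic: the $n^{-3/2}$ factor reflects the square-root nature of the nearest singularity of the flow, and capturing it requires either a careful Cauchy-estimate optimization over contours approaching the fold or an exact reading of the extremal coefficients from \eqref{simpleexample2.eq2} followed by Stirling's bound $m!\ge\sqrt{2\pi m}(m/e)^m$. The supporting steps — the path-sum identity $(e^{t\mathbf A})_{1,n}=(e^{t\mathbf A_N})_{1,n}$, the differential inequality, and the bootstrap tying everything to $T^*$ — are comparatively routine once the extremal comparison is set up. The $\min(\cdot,2)$ cap in \eqref{Tstar.def} enters only as a technical device guaranteeing that $|x_0|$ stays strictly inside the analyticity disk and that the fold estimate remains uniform on $[0,T^*]$.
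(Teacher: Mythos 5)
Your structural reduction is correct, and it is a genuinely different framing from the paper's: since $c_0=0$ makes $\mathbf{A}$ upper triangular, the path-sum identity $(e^{t\mathbf{A}})_{1,n}=(e^{t\mathbf{A}_N})_{1,n}$ for $n\le N$ does hold, so $x_{1,N}(t)=\sum_{n=1}^N a_n(t)x_0^n$ is exactly the degree-$N$ truncation in $x_0$ of the flow $x(t;x_0)=\sum_{n\ge 1}a_n(t)x_0^n$, and the theorem reduces to a Taylor-tail estimate. The paper never uses this identity; its proof (Appendix B) works instead with the error variables $y_k=x^k-x_{k,N}$, a triangular family of integral inequalities obtained from \eqref{Carleman.thm.eq1} and a local bound on $|x(t)|$ (Lemma \ref{boundestimate.lem}), a backward induction from $k=N$ down to $k=1$, and Stirling's inequality at the very end. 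Your majorant set-up is also sound: the coefficient recursion has nonnegative structure, so $|a_n(t)|\le A_n(t)$, the Taylor coefficients of the flow of $\dot X=\tfrac{C_0}{R_0}(e^{R_0X}-1)$.

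The central quantitative step, however, rests on a false premise. For the flow of a scalar ODE, $\partial_z X(t;z)$ solves the variational equation and equals $\exp\bigl(\int_0^t F'(X(s;z))\,ds\bigr)$, which never vanishes; the flow map has no critical points, so the ``square-root fold where $\partial_z X(t;z)=0$'' does not exist. The extremal flow is explicitly $X(t;z)=-\tfrac{1}{R_0}\ln\bigl(1-e^{C_0t}(1-e^{-R_0z})\bigr)$; its nearest singularity, at $z^*(t)=-\tfrac{1}{R_0}\ln(1-e^{-C_0t})$, is the initial value whose trajectory blows up exactly at time $t$, and it is a \emph{logarithmic} branch point. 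Singularity analysis therefore gives $A_n(t)\asymp n^{-1}\,(z^*(t))^{-n}$, not $n^{-3/2}(z^*(t))^{-n}$, so your claimed coefficient bound is false as stated and the derivation of the $N^{-3/2}$ prefactor collapses. Indeed, in the theorem the factor $N^{-3/2}$ does not reflect any singularity type: in the paper's proof it arises from Stirling's bound applied to $N^{N-2}/(N-1)!$ and comes packaged with a compensating $e^N$ inside the geometric factor in \eqref{Carleman.thm.eq2}. Your plan is likely repairable, because the theorem's geometric ratio $\tfrac{R_0|x_0|e}{\tilde R_0}e^{C_0(1+1/\tilde R_0)e^{\tilde R_0}t}$ strictly exceeds the sharp ratio $|x_0|/z^*(t)$ on $[0,T^*]$, so an extra factor $\sqrt{N}$ can be absorbed into that slack; but this requires replacing the fold analysis by the correct logarithmic-singularity estimate (or exact coefficient extraction from the closed form above) together with a quantitative comparison of $z^*(t)$ against the constants $\tilde R_0$, $T^*$ in \eqref{tildeR0.def}--\eqref{Tstar.def}, none of which is in the proposal.
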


We follow the argument in \cite{Amini2022} to prove Theorem \ref{Carleman.thm}. For the completeness of this chapter, we include a sketch proof
in Appendix \ref{proof.section}.

\medskip

 For the  governing function $g$ in  \eqref{simpleexample2.eq1}, 
the corresponding Carleman linearization \eqref{Carleman.eq}
is homogeneous (i.e., ${\bf a}={\bf 0}$) and
has the upper triangular state matrix  
\begin{equation}
{\bf A}= -a \begin{bmatrix}
    i&   \frac{i^2 }{2!}& \cdots  & \frac{i^{N-1}}{(N-1)!} &  \frac{  i^{N}}{N!} & \cdots \\
     & 2i & \cdots  &   \frac{2i^{N-2}}{(N-2)!}   &  \frac{2i^{N-1}}{(N-1)!} & \cdots \\
     &  &\ddots & \vdots &  \vdots &\ddots \\
         &  & & (N-1)i &  \frac{(N-1)i^2}{2!} &\cdots\\
            &  & &  & Ni &  \cdots\\
            &  & &  &  &  \ddots
                  \end{bmatrix}, \end{equation}
and its finite-section approximation 
is given by 
\begin{equation}
\label{simpleexample2.eq7}
\hspace{-0.1cm} \frac{d}{dt} \begin{bmatrix}
    {{x}}_{1,N}(t) \\ {{x}}_{2,N}(t) \\ \vdots\\ \vdots \\ {x}_{N-1,N}(t) \\ x_{N,N}(t)
\end{bmatrix}
   = -a \begin{bmatrix}
    i&   \frac{i^2 }{2!}& \cdots  & \cdots &  \frac{i^{N-1} }{(N-1)!} &  \frac{  i^{N}}{N!} \\
     & 2i & \cdots  & \cdots  &   \frac{2 i^{N-2}}{(N-2)!} &  \frac{2i^{N-1}}{(N-1)!} \\
     &  &\ddots & \ddots & \vdots & \vdots \\
         &  & & \ddots & \vdots & \vdots \\
         &  & & & (N-1)i & \frac{(N-1)i^2}{2!} \\
         &  & & &  & Ni
      \end{bmatrix}
     \begin{bmatrix}
    {x}_{1,N}(t) \\ {x}_{2,N}(t) \\ \vdots \\ \vdots\\ x_{N-1, N}(t) \\{x}_{N,N}(t)
\end{bmatrix} 
\end{equation}
with initial $x_{k, N}(0)=x_0^k$ for  $1\le k\le N$.  Due to the upper triangular property of the state matrix, the above linear system can be solved by a set of scale-valued ODEs: 
$$\frac{d x_{N, N}}{dt}= -Nai x_{N, N}\   \ {\rm with}\  \ x_{N, N}=x_0^N$$
and
$$\frac{dx_{k, N}}{dt}=-kai x_{k, N}-ka\sum_{k'=k+1}^N \frac{i^{k'-k+1}}{(k'-k+1)!} x_{k', N}\ \ {\rm with}\  \ x_{k, N}(0)=x_0^k$$
inductively for $k=N-1, \ldots, 1$.

\begin{figure}[t] 
  \setkeys{Gin}{width=\linewidth}
  \setlength\tabcolsep{2pt}
  \begin{tabularx}{\textwidth}{XXX}
    \includegraphics[width=4.9cm, height=3.8cm]{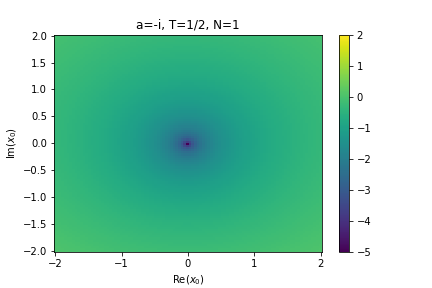} &
    \includegraphics[width=4.9cm, height=3.8cm]{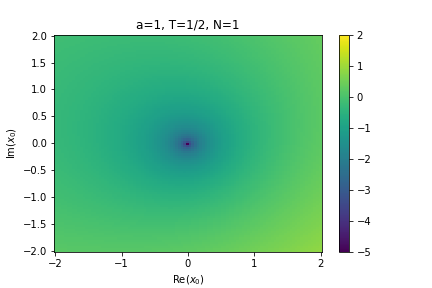} &
    \includegraphics[width=4.9cm, height=3.8cm]{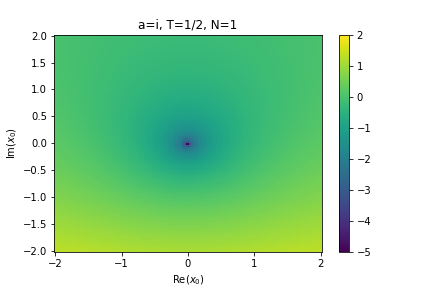}   \\
    \includegraphics[width=4.9cm, height=3.8cm]{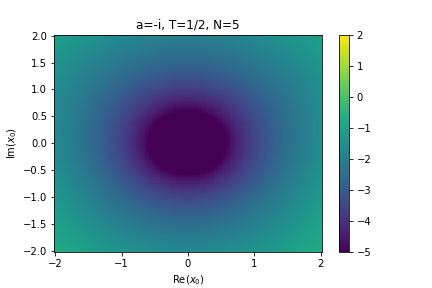} &
    \includegraphics[width=4.9cm, height=3.8cm]{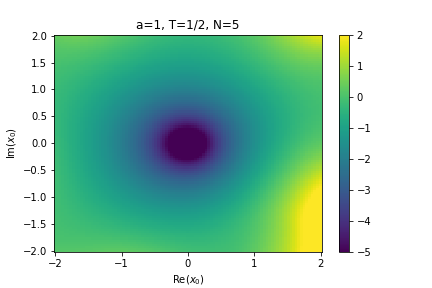} &
    \includegraphics[width=4.9cm, height=3.8cm]{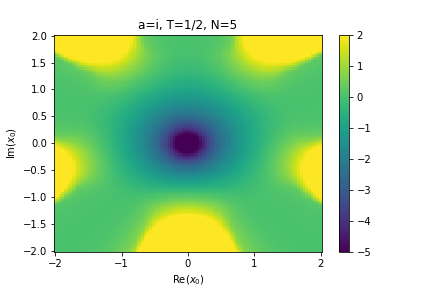}   \\
    \includegraphics[width=4.9cm, height=3.8cm]{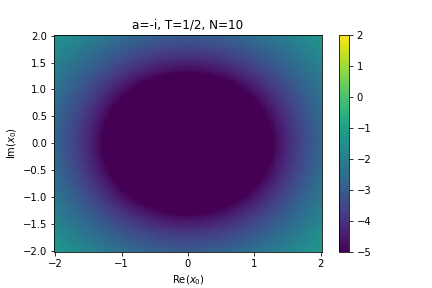} &
    \includegraphics[width=4.9cm, height=3.8cm]{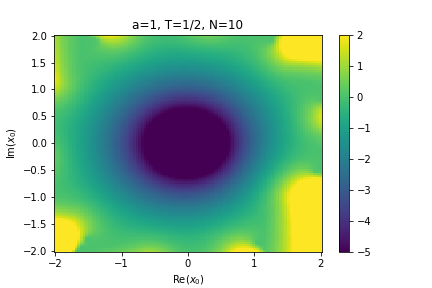} &
    \includegraphics[width=4.9cm, height=3.8cm]{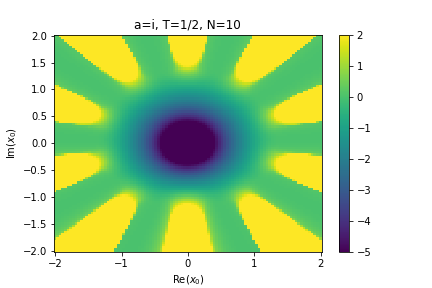}
  \end{tabularx}
\caption{The figures above display the finite-section approximation errors, given by $\max(\min(E_{C}(x_0, T^*, N), 2), -5)$ in \eqref{ETC.def00}, over the domain $-2 \le \Re x_0, \Im x_0 \le 2$, with a
fixed time range $T^*= 1/2$. The columns correspond to value $a = -i$ (left), $1$ (middle), and $i$ (right), while the rows correspond to $N = 1$ (top), $5$ (middle), and $10$ (bottom).  
          }\label{classicalcarleman}
\end{figure}

For the dynamical system with the governing function $g$ given in \eqref{simpleexample2.eq1},
the requirement \eqref{Carleman.thm.eq1} in Theorem \ref{Carleman.thm} is satisfied with $C_0=|a|=1$ and $R_0=1$. Hence as a consequence of  Theorem \ref{Carleman.thm}, \begin{equation}\label{simpleexample.convergence}
|x_{1, N}(t)-x(t)| \le (2\pi)^{-1/2}
 \tilde R_0 e^{\tilde R_0} N^{-3/2}
\Big(\frac{ |x_0| e}{\tilde R_0} e^{e^{{\tilde R}_0} (1+ 1/\tilde R_0) t } \Big)^N
\end{equation}
hold for all $ 0\le t\le
 \frac{\tilde R_0}{(\tilde R_0+1)e^{\tilde R_0} } \min\big(\ln \frac{\tilde R_0}{|x_0|e}, 2\big)$, 
where $x_{1, N}$ is the first component 
of the state vector in \eqref{simpleexample2.eq7},
and $\tilde R_0=\max(1, |x_0| e^2)$.
Shown in  Figure \ref{classicalcarleman} is
the  approximation error measurement
\begin{equation}\label{ETC.def00}
E_C(x_0, T^*, N)= \max_{0\le t\le T^*} \Sp \log_{10} \big|e^{i(x_{1, N}(t)-x(t))}-1\big|,
\end{equation}
where the initial $x_0$ is selected in the domain $[-2, 2]+[-2, 2]i$,  and $a=-i, 1, i$
from left to right,  and $N=1, 5, 10$ from top to bottom.
A comparison of figures across rows reveals that, as expected, 
the finite-section approximation achieves higher accuracy when the initial value is close to the origin (where the governing function is well-approximated by polynomials of low degrees), and the approximation error further decreases as the truncation order $N$ increases, provided that the finite-section approximation converges.
 On the other hand,  the first component of the finite-section approximation fails to approximate the original state vector for initial values far from the origin.
 Comparing column-wise results,  we observe that the finite-section approximation performs
 more effectively for $a=-i$ in which 
 where the origin is a stable equilibrium
 than 
for $a=i$ where the origin is an unstable equilibrium.

\begin{figure}[t] 
  \setkeys{Gin}{width=\linewidth}
  \setlength\tabcolsep{2pt}
  \begin{tabularx}{\textwidth}{XXX}
    \includegraphics[width=4.9cm, height=3.8cm]{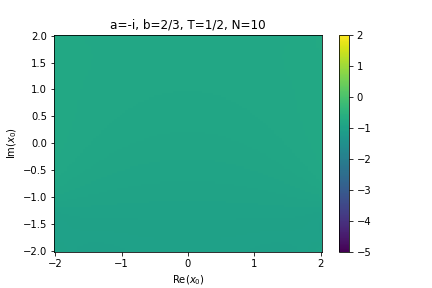} &
    \includegraphics[width=4.9cm, height=3.8cm]{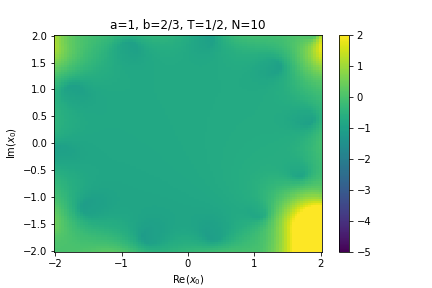} &
    \includegraphics[width=4.9cm, height=3.8cm]{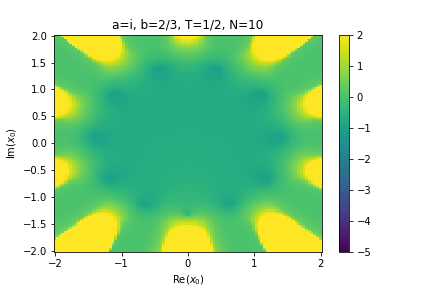}   \\
    \includegraphics[width=4.9cm, height=3.8cm]{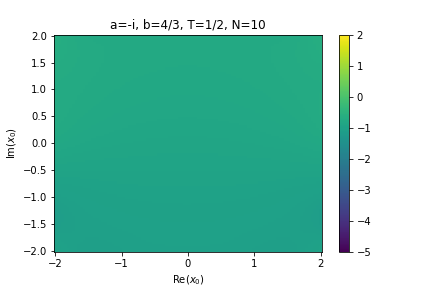} &
    \includegraphics[width=4.9cm, height=3.8cm]{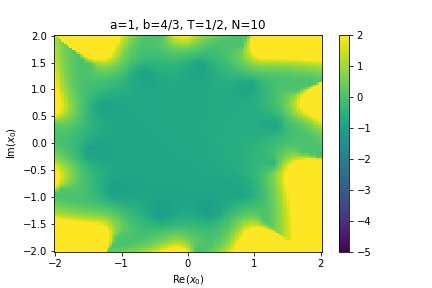} &
    \includegraphics[width=4.9cm, height=3.8cm]{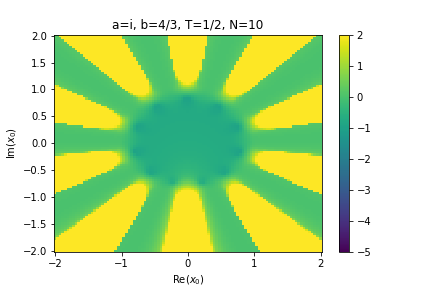}
  \end{tabularx}
\caption{The finite-section approximation errors, given by $\max(\min(E_{C}(x_0, T^*, N), 2), -5)$ in \eqref{ETC.def00}, over the domain $-2 \le \Re x_0, \Im x_0 \le 2$, with a
fixed time range $T^*= 1/2$ and truncation order $N=10$. The columns correspond to value $a = -i$ (left), $1$ (middle), and $i$ (right), while the rows correspond to $b=2/3$ (top) and $b=4/3$ (bottom).  
          }
          \label{classicalcarleman.nonzero}
\end{figure}

Consider the dynamical system
\begin{equation} \label{addynamical}
   \frac{dx}{dt} = a(1-b e^{ix}), \ t\ge 0
\end{equation}
with $0\ne a, b \in {\mathbb C}$, which reduces to the dynamical system \eqref{simpleexample2.eq1} when $b=1$ is selected. The dynamical system \eqref{addynamical} has the origin as its equilibrium if and only if $b=1$.  Shown in Figure \ref{classicalcarleman.nonzero} is the approximation error of the finite-section approach to its Carleman linearization with $b=2/3$ and $4/3$. 
Compared with the numerical results shown in Figure \ref{classicalcarleman} where the origin is an equilibrium point, the finite-section approximation shown in Figure \ref{classicalcarleman.nonzero}  exhibits slower convergence when the initial condition $x_0$ lies near the origin; however, it still achieves exponential convergence over a finite time interval, even though the origin is not an equilibrium point. We conjecture that such exponential convergence property could be mathematically proved 
 for the finite-section approximation to the Carleman linearization of the dynamical system \eqref{nonlineardynamical.def0} under the assumption that the parameter $g(0)$ is close to zero (where the state matrix ${\bf A}$ in
\eqref{Carlemanmatrix.def-A} is no longer an upper triangular matrix).
The main challenge is that the conventional perturbation approach is not directly applicable due to the presence of an unbounded operator in the infinite-dimensional lifted system

\section{Carleman-Fourier Linearization}
\label{CF.section}

 In this section,  we consider Carleman-Fourier linearization  for the nonlinear dynamical system described in \eqref{nonlineardynamical.def0} with the governing function $g$ given in \eqref{complexdynmaic.fdef}.
The extended Carleman-Fourier linearization framework for nonlinear dynamical systems with governing fields having multiple fundamental frequencies can be found in \cite{Chen2024, moteesun2024}.

 Let $\mathbb{Z}_0$ be the set of all nonzero integers.    
The classical Carleman linearization approach, based on monomial functions $x^n, n\ge 1$, 
can be naturally extended to the  dynamical system
\eqref{nonlineardynamical.def0}
with periodic governing function 
$g$  in  \eqref{complexdynmaic.fdef},
by considering  the derivatives of the Fourier basis $e^{inx}, n\in {\mathbb Z}_0$.
    By \eqref{nonlineardynamical.def0} and \eqref{complexdynmaic.fdef}, we have 
$$\frac{d e^{inx}}{dt}= in e^{inx} \frac{dx}{dt}=
in  \sum_{m=-M}^M g_m e^{i(n+m)x}, \ n\in {\mathbb Z}_0.$$
Combining the above ODEs together yields
the following linearization of 
the nonlinear dynamical system \eqref{nonlineardynamical.def0},
\begin{equation}\label{CarlemanFourier.eq0}
\frac{d{\bf w}}{dt} = {\bf H}\,{\bf w} + {\bf h},
\end{equation}
where  
$
{\bf w} = [e^{inx}]_{n \in \mathbb{Z}_0}$ is the state vector, the state matrix \({\bf H} = [in\,g_{n'-n}]_{n, n' \in \mathbb{Z}_0}\) and
nonhomogenous term
\({\bf h} = [\,in\,g_{-n}\,]_{n \in \mathbb{Z}_0}\)
are given by
\begin{equation}\label{Carlemanmatrix.def}
{\bf H}=
 =
\begin{bmatrix}
\ddots & \vdots & \vdots & \vdots & \vdots & \ddots\\
\cdots & 2ig_0 & 2ig_{-1} & 2ig_{-3} & 2ig_{-4} & \cdots\\
\cdots & 2ig_0 & 2ig_{-1} & 2ig_{-3} & 2ig_{-4} & \cdots\\
\cdots & ig_1 & ig_0 & ig_{-2} & ig_{-3} & \cdots\\
\cdots & -ig_3 & -ig_2 & -ig_0 & -ig_{-1} & \cdots\\
\cdots & -2ig_4 & -2ig_3 & -2ig_1 & -2ig_0 & \cdots\\
\ddots & \vdots & \vdots & \vdots & \vdots & \ddots
\end{bmatrix},
\end{equation}
and
\[
{\bf h} = [\,\cdots,\; 2ig_{-2},\; ig_{-1},\; -ig_1,\; -2ig_2,\;\cdots]^{\mathsf{T}}
\]
respectively. 
The above linearization 
 captures the periodicity of the governing function $g$ in \eqref{complexdynmaic.fdef}.
However, unlike in the Carleman linearization, the state matrix ${\bf H}$ in the current linearization approach
 does not maintain an upper triangular form, which is crucial to our convergence analysis in 
 Theorem \ref{Carleman.thm}.  
 This fundamental difference highlights the need to reconsider the linearization strategy for the dynamical system  \eqref{nonlineardynamical.def0}
 with the governing function $g$ given in \eqref{complexdynmaic.fdef}.

Our novel approach in \cite{Chen2024, moteesun2024}
is to introduce
an extended state vector \({\bf x} = [x_1, x_2]^{\mathsf{T}}\), where \(x_1 = x\) and \(x_2 = -x\).   
By \eqref{nonlineardynamical.def0} and \eqref{complexdynmaic.fdef}, one may verify that the dynamical system associated with the extended state vector ${\bf x}=[x_1, x_2]^T$ has the governing field being a trigonometric polynomial with  nonnegative frequencies only, 
\begin{equation} \label{CF.eq1}
\frac{d}{dt}\left[\begin{array}{c} x_1\\
 x_2\end{array}\right]=\sum_{m=0}^M \left[\begin{array}{c}  g_{m}\\
-g_{m} \end{array}\right]
e^{imx_1}+\sum_{m=1}^M \left[\begin{array}{c}  g_{-m}\\
-g_{-m}\end{array}\right] e^{imx_2}.
\end{equation}

Set  $y_{\alpha}=e^{i(\alpha_1 x_1+\alpha_2x_2)}$ for 
 $\alpha=[\alpha_1, \alpha_2]\in {\mathbb Z}_+^2$.
By  \eqref{CF.eq1}, we have 
\begin{eqnarray}\label{CF.eq2}
\frac{d y_\alpha}{dt} & \hskip-0.08in = & \hskip-0.08in i y_\alpha \Big (\alpha_1 \frac{d x_1}{dt}+\alpha_2 \frac{d x_2}{dt}\Big)  \nonumber\\
& \hskip-0.08in = &  \hskip-0.08in i  (\alpha_1-\alpha_2)   \Big(\sum_{m=0}^M 
g_m e^{imx_1}+\sum_{m=1}^M g_{-m} e^{imx_2}\Big) y_\alpha\nonumber\\
& \hskip-0.08in = & \hskip-0.08in  i (\alpha_1-\alpha_2) \sum_{\beta\in {\mathbb Z}_+^2} h_{\beta-\alpha} y_{\beta},
\end{eqnarray}
where for $\gamma=[\gamma_1, \gamma_2]^T\in {\mathbb Z}^2$ we define
\begin{equation}\label{CF.eq3}
h_\gamma=\left\{\begin{array}{ll}
g_{\gamma_1}     & {\rm if} \  0\le \gamma_1\le M\ {\rm and}\ \gamma_2=0, \\
g_{-\gamma_2}  & {\rm if} \ \gamma_1=0 \ {\rm and} \ 1\le \gamma_2\le M,     \\
0  & {\rm otherwise}.
\end{array}\right.\end{equation}

For $\alpha=[\alpha_1, \alpha_2]^T\in {\mathbb Z}^2$, denote its order by $|\alpha|=|\alpha_1|+|\alpha_2|$.
Regrouping all equations in \eqref{CF.eq2} with  $|\alpha|=k$ yields
\begin{equation}\label{CF.eq4}
\frac{d {\bf y}_k}{dt}=\sum_{l=k}^{k+M}
{\bf B}_{kl} {\bf y}_l
\end{equation}
with initial ${\bf y}_k(0)={\bf y}_k^0=[e^{ikx_0}, e^{i(k-2)x_0}, \ldots, e^{-i(k-2)x_0}, e^{-ikx_0}]^{\mathsf{T}}, \ k\ge 1$,
where
  ${\bf y}_k=[y_\alpha]_{|\alpha|=k}=[e^{ikx_1}, e^{i((k-1)x_1+x_2)}, \ldots, e^{i(x_1+(k-1)x_2)}, e^{ikx_2}]^{\mathsf{T}} $, 
and
\begin{equation}
  \label{bkl.def}  
{\bf B}_{kl}=[ i(\alpha_1-\alpha_2) h_{\beta-\alpha}]_{|\alpha|=k, |\beta|=l},\  1\le k\le l, \end{equation} are block matrices depending on Fourier coefficients $g_{\pm (l-k)}$.
The dynamical systems \eqref{CF.eq4}
can be rewritten as the following matrix formation:
\begin{equation}\label{CF.eq5}
\frac{d{\bf y}}{dt}
:= \frac{d}{dt}\begin{bmatrix}
     {\bf y}_1\\  {\bf y}_2 \\ \vdots \\ 
     {\bf y}_N \\
  \vdots
\end{bmatrix}
= 
\begin{bmatrix} {\bf B}_{11} & {\bf B}_{12} & \cdots & {\bf B}_{1N} & \cdots\\
& {\bf B}_{22} & \cdots & {\bf B}_{2N}& \cdots\\
& & \ddots & \vdots & \ddots\\
& & & {\bf B}_{NN} & \ddots\\
& & & & \ddots
\end{bmatrix}
\begin{bmatrix}
    {\bf y}_1\\  {\bf y}_2 \\ \vdots \\ 
     {\bf y}_N \\
  \vdots
\end{bmatrix}=:{\bf B} {\bf y}
\end{equation}
This revised  linearization retains the fundamental benefits of the traditional Carleman techniques, such as a block-upper triangular state matrix, circumvents the structural shortcomings 
associated with the direct Fourier-based linearization in \eqref{CarlemanFourier.eq0}. 
 We call the  infinite-dimensional  dynamical system \eqref{CF.eq5} as {\bf Carleman-Fourier linearization} of the  finite-dimensional nonlinear dynamical system \eqref{nonlineardynamical.def0} when
the periodic vector field ${\bf g}$
 satisfies  \eqref{complexdynmaic.fdef}.

For the case that the governing function $g$ has nonnegative frequecies only, i.e.,
\begin{equation}\label{assumption0.nonnegative}
g(x)=\sum_{m=0}^M g_m e^{imx},
\end{equation}
we have
\begin{equation}\label{complexdynamic.def2}
\frac{dz_k}{dt}= ik \sum_{l=k}^{M+k} g_{l-k} z_l
\end{equation}
where $z_k=y_{[k,0]}=e^{ikx_1}$ is the first component of  ${\bf y}_k, k\ge 1$ in \eqref{CF.eq5}.
Then we can reformulate  ODEs in \eqref{complexdynamic.def2} in the following matrix form,
\begin{equation}\label{CF.eq7}
    \frac{d{\bf z}}{dt}={\bf G} {\bf z}, \ t\ge 0,
\end{equation}
 with initial condition ${\bf z}(0)=[ e^{ikx_0} ]_{k=1}^\infty$, where
  $  {\bf z}=[ z_1,  z_2, \ldots,  z_N, \ldots]^{\mathsf{T}}$ 
and
\begin{equation} \label{Carleman.eq6}
{\bf G}=
\begin{bmatrix}
    ig_0 &  ig_1 & \dots & ig_{N-1}  & \cdots\\
      &2ig_0 & \cdots &  2ig_{N-2}  & \cdots\\
      & & \ddots & \vdots & \ddots\\
      & & & Ni g_0 & \cdots \\
      & & & & \ddots
\end{bmatrix}.
\end{equation}
As the state vector ${\bf z}$
and the state matrix ${\bf G}$ in \eqref{CF.eq7}
are parts of the state vector ${\bf y}$
and the state matrix ${\bf B}$ in 
the infinite-dimensional dynamical system 
\eqref{CF.eq5} respectively,  we call  the infinite-dimensional linear system
\eqref{CF.eq7}
as the {\bf  Carleman-Fourier linearization} of the dynamical system  \eqref{nonlineardynamical.def0}
when  the governing function $g$ has nonnegative frequecies only.

For the  governing function in 
\eqref{simpleexample2.eq1}, one may verify that the corresponding Carleman-Fourier linearization is given by
\begin{equation} \label{CF.eq8}
\frac{d}{dt}\begin{bmatrix}
    z_1 \\
 z_2
\\
 z_3\\
 \vdots \\
 z_{N-1} \\
 z_N \\
     \vdots
\end{bmatrix}
= ai
\begin{bmatrix}
    1 &  -1 & 0 & \dots & 0 & 0   & \cdots\\
      &2 & -2 & \cdots &  0  & 0  & \cdots\\
      & &3 & \ddots &    0  & 0  & \cdots\\
      & & & \ddots & \vdots & \vdots & \cdots\\
       & & & & N-1 & -N+1 & \cdots \\
      & & & & & N & \cdots \\
      & & & & & \ddots & \ddots
\end{bmatrix}
\begin{bmatrix}
     z_1 \\
 z_2
\\
 z_3\\
 \vdots \\
 z_{N-1} \\
 z_N \\
     \vdots
\end{bmatrix},
\end{equation}
where the state matrix is  an upper-triangular structure with bandwidth one.

\section{Finite-section Approximation to
Carleman-Fourier linearization}
\label{finitesectionapproximation.section}

For the Carleman-Fourier linearization  \eqref{CF.eq5}, we define its 
{\bf finite-section approximation} of order $N\ge 1$
 by 
\begin{equation}
\label{CF.eq41}
\frac{d}{dt}
\begin{bmatrix}
    {{\bf y}}_{1,N} \\
    {{\bf y}}_{2,N} \\
    \vdots \\
    {{\bf y}}_{N,N}
\end{bmatrix}
=
\begin{bmatrix} {\bf B}_{11} & {\bf B}_{12} & \cdots & {\bf B}_{1N} \\
& {\bf B}_{22} & \cdots & {\bf B}_{2N}\\
& & \ddots & \vdots \\
& & & {\bf B}_{NN}
\end{bmatrix}
\begin{bmatrix}
    {{\bf y}}_{1,N} \\
    {{\bf y}}_{2,N} \\
     \vdots \\
    {{\bf y}}_{N,N}
\end{bmatrix}\ \  {\rm and}\  {\bf y}_{k,N}(0)=
\begin{bmatrix}
e^{ikx_0}\\ e^{i(k-2)x_0}\\ \vdots\\ e^{-ikx_0}
\end{bmatrix}, 1\le k\le N.
\end{equation}
By 
 \eqref{CF.eq3}, \eqref{bkl.def} and \eqref{CF.eq5},
the  finite-section approximation 
\eqref{CF.eq41} of order $N=1$
reduces to 
$$
\frac{d {\bf y}_{1, 1}}{dt} 
= \begin{bmatrix} ig_0 & 0 \\
0 & -ig_0
\end{bmatrix} {\bf y}_{1, 1}
\ \ {\rm and} \ \ {\bf y}_{1, 1}(0)=
\begin{bmatrix} e^{i x_0} \\
e^{-ix_0}\end{bmatrix},
$$
which has the solution 
\begin{equation} \label{CF.eq41a}
  {\bf y}_{1, 1}(t)= [e^{i (x_0+g_0 t)}, e^{-i (x_0+g_0 t)}]^T.
\end{equation}
Similarly 
the  finite-section approximation 
\eqref{CF.eq41} of order  $N=2$
can be explicitly written as
$$
\frac{d}{dt} 
\begin{bmatrix} {\bf y}_{1, 2} \\
{\bf y}_{2, 2}
\end{bmatrix} 
= \begin{bmatrix} ig_0 & 0  & i g_1 & ig_{-1} & 0 \\
0 & -ig_0 & 0 & -ig_1 & -i g_{-1} \\
0 & 0 & 2ig_0 & 0 & 0 \\
0 &0 & 0 & 0 & 0\\
0 & 0 & 0 & 0 & -2ig_0
\end{bmatrix}
\begin{bmatrix} {\bf y}_{1, 2} \\
{\bf y}_{2, 2}
\end{bmatrix} 
\ \ {\rm and} \ \ 
\begin{bmatrix} {\bf y}_{1, 2}(0) \\
{\bf y}_{2, 2}(0)
\end{bmatrix} =
\begin{bmatrix} e^{i x_0} \\
e^{-ix_0}\\
e^{2i x_0}\\
1\\
e^{-2i x_0}
\end{bmatrix},
$$
which has the  solution  given
by
\begin{equation}  \label{CF.eq41b}
\begin{bmatrix} {\bf y}_{1, 2}(t)\\
{\bf y}_{2, 2}(t)
\end{bmatrix}
=
\begin{bmatrix}
(g_1 e^{i(2x_0+g_0t)}+g_{-1}) (e^{ig_0 t}-1)/g_0+ e^{i(x_0+g_0t)}\\ 
(g_1 e^{-i(2x_0+g_0t)}+g_{-1}) (e^{-ig_0 t}-1)/g_0+ e^{-i(x_0+g_0t)}\\ 
  e^{2i (x_0+g_0 t)}\\ 1\\ 
 e^{-2i (x_0+g_0 t)}
\end{bmatrix}.
\end{equation}

The state matrix ${\bf B}$ in the infinite-dimensional system \eqref{CF.eq5} fails to be a bounded operator on the sequence space 
$\ell^2({\mathbb Z}_{++}^2)$, rendering standard Hilbert space dynamical system theory inapplicable.  However,  as shown in the following theorem, 
the first block ${\bf y}_{1, N}$
in \eqref{CF.eq41} still
converges to the exponential $e^{i{\bf x}}$ of the extended
state vector ${\bf x}=[x, -x]^T$ on a time range.

\begin{theorem} {\rm \cite{Chen2024}}
\label{maintheoremanalytic.thm1}
Consider the dynamical system  
\eqref{nonlineardynamical.def0}
 governed by the periodic function $g$ in \eqref{complexdynmaic.fdef}, and 
the finite-section approximation \eqref{CF.eq41}
to its Carleman-Fourier linearization  
\eqref{CF.eq5}.
Write ${\bf y}_{1, N}=[y_{1, N}^+, y_{1, N}^-]^T$
 take $R>e$,  and select  
 the initial $x_0$ of the  dynamical system  
\eqref{nonlineardynamical.def0} so that 
\begin{equation} \label{maintheoremanalytic.thm1.eq1}
|\Im {x}_0|<\ln R-1.
\end{equation} 
Then 
\begin{equation} \label{maintheoremanalytic.thm1.eq2}
 \hspace{0.05cm} \big| {y}_{1, N}^{\pm} (t) \hspace{0.05cm} e^{\mp i x(t)}-1\big| \leq C_0 N^{-3/2} 
 e^{ D_0(R)  N t}
 \Big( \frac{\exp (|\Im {x}_0|+1)}{R }\Big)^{ (e-1)N/(2e-1)},  \ 0\le t\le T_{CF}^*,
\end{equation}
where   $D_0(R)= 2\max \big(|g_0|, 
(|g_1|+|g_{-1}|)R, \cdots,  (|g_M|+|g_{-M}|)R^M \big)$,
\begin{equation}\label{maintheoremanalytic.thm1.eq3}
C_0 = \frac{1}{\sqrt{2\pi} (e-1)} \exp \left( \frac{3e-1}{2e-1} 
|\Im {x}_0|+ \frac{3e-1}{2e-1} \Sp \ln R- \frac{e}{2e-1} \right),
\end{equation}
and 
 \begin{equation} \label{maintheoremanalytic.thm1.eq4}
    T^*_{CF} = \frac{e-1}{(2e-1)D_0(R) } \ln 
    \frac{R}{\exp(|\Im x_0|  + 1)}>0.
\end{equation}
\end{theorem}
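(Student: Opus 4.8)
The plan is to follow the template of the Carleman estimate in Theorem~\ref{Carleman.thm} and \cite{Amini2022}, adapted to the block-upper-triangular structure of \eqref{CF.eq5}. The starting observation is that the infinite-dimensional system \eqref{CF.eq5} admits the exact closed-form solution $\mathbf{y}_k(t)=[e^{i\alpha\cdot\mathbf{x}(t)}]_{|\alpha|=k}$, so in particular $\mathbf{y}_1(t)=[e^{ix(t)},e^{-ix(t)}]^{\mathsf{T}}$. Writing $\mathbf{e}_k=\mathbf{y}_{k,N}-\mathbf{y}_k$ for the block truncation error, the two quantities in \eqref{maintheoremanalytic.thm1.eq2} are exactly $|y_{1,N}^{\pm}e^{\mp ix}-1|=e^{\pm\Im x(t)}\,|(\mathbf{e}_1)^{\pm}|$, so it suffices to produce a weighted bound on the first-block error $\mathbf{e}_1$. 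First I would record the two elementary ingredients from \eqref{bkl.def} and \eqref{CF.eq3}: every nonzero entry of $\mathbf{B}_{k,k+m}$ has the form $i(\alpha_1-\alpha_2)h_{\beta-\alpha}$ with $|\alpha_1-\alpha_2|\le k$ and $h_{\beta-\alpha}\in\{g_m,g_{-m},0\}$, whence $\|\mathbf{B}_{k,k+m}\|\le k(|g_m|+|g_{-m}|)$; and $\|\mathbf{y}_l(t)\|\le\sqrt{l+1}\,e^{l|\Im x(t)|}$ since $|e^{i\alpha\cdot\mathbf{x}}|=e^{-(\alpha_1-\alpha_2)\Im x}$ and $|\alpha_1-\alpha_2|\le l$.

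Next I would establish the a priori bound $|\Im x(t)|\le|\Im x_0|+1$ for $0\le t\le T^*_{CF}$. Taking imaginary parts in \eqref{nonlineardynamical.def0} gives $\tfrac{d}{dt}\Im x=\Im g(x)$, and $|g(x)|\le\sum_{m}|g_m|e^{-m\Im x}$ is controlled by the $D_0(R)$-type quantities so long as $e^{|\Im x|}\le R$. A continuity/bootstrap argument then confines $|\Im x(t)|$ to the budget $|\Im x_0|+1<\ln R$, guaranteed by the hypothesis \eqref{maintheoremanalytic.thm1.eq1}, precisely over the interval $[0,T^*_{CF}]$ defined in \eqref{maintheoremanalytic.thm1.eq4}; this is exactly why $T^*_{CF}$ is proportional to $D_0(R)^{-1}\ln\frac{R}{\exp(|\Im x_0|+1)}$. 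On this interval the block bound sharpens to $\|\mathbf{y}_l(t)\|\le\sqrt{l+1}\,e^{l(|\Im x_0|+1)}$, which is the source of the decaying factor $\exp(|\Im x_0|+1)/R$ in \eqref{maintheoremanalytic.thm1.eq2}.

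The third and principal step is to propagate the truncation error down the block triangle. Since $\mathbf{B}_{kl}=\mathbf{0}$ unless $k\le l\le k+M$, the finite section \eqref{CF.eq41} agrees with the exact equations \eqref{CF.eq4} except in the top blocks $k>N-M$, where the couplings $\sum_{l=N+1}^{k+M}\mathbf{B}_{kl}\mathbf{y}_l$ are discarded; thus $\tfrac{d}{dt}\mathbf{e}_k=\sum_{l=k}^{\min(k+M,N)}\mathbf{B}_{kl}\mathbf{e}_l-\mathbf{s}_k$, with a source $\mathbf{s}_k=\sum_{l=N+1}^{k+M}\mathbf{B}_{kl}\mathbf{y}_l$ supported on $N-M<k\le N$. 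Iterating Duhamel's formula against the diagonal propagators $\exp(\mathbf{B}_{kk}t)$ (here $\mathbf{B}_{kk}$ is diagonal with entries $i(\alpha_1-\alpha_2)g_0$) expresses $\mathbf{e}_1(t)$ as a sum over increasing chains $1=k_0<k_1<\cdots<k_j$, with increments at most $M$ and terminating in a source block $k_j>N$, of products $\prod_i\mathbf{B}_{k_{i-1}k_i}$ against iterated time integrals and a terminal $\mathbf{y}_{k_j}$. Introducing the geometric weight $R^{-k}$ per block converts each off-diagonal factor into $R^{m}\|\mathbf{B}_{k,k+m}\|\le k(|g_m|+|g_{-m}|)R^m\le\tfrac12 k\,D_0(R)$, while the diagonal propagators contribute at most $e^{k|g_0|t}\le e^{\frac12 D_0(R)Nt}$ with total elapsed time $\le t$; the factor $k\le N$ is exactly what produces the unbounded-rate exponential $e^{D_0(R)Nt}$, and the weighted source carries the gain $(\exp(|\Im x_0|+1)/R)^{l}$ with $l>N$.

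The remaining work is to sum the chain contributions and optimize. Each chain of length $j$ contributes a time factor $t^{j}/j!$ and a product $\prod_i k_{i-1}$ of block indices, and one must balance the combinatorial growth of these products (largest for unit increments, giving near-factorial weights) against the geometric per-block gain $\exp(|\Im x_0|+1)/R<1$. I expect this optimization, carried out via the sharp Stirling bound $k!\ge\sqrt{2\pi k}\,(k/e)^k$, to be the main obstacle: it is what fixes the precise exponent $(e-1)/(2e-1)$ on the geometric factor, the polynomial prefactor $N^{-3/2}$, and the explicit constant $C_0$ in \eqref{maintheoremanalytic.thm1.eq3}, exactly as the analogous Stirling optimization produces the $N^{-3/2}$ and the constant in Theorem~\ref{Carleman.thm}. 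The secondary subtlety is the simultaneous presence of the unbounded diagonal (the factor $k\le N$ forcing an $e^{D_0(R)Nt}$ rate rather than a bounded-operator estimate) together with the growth $e^{l|\Im x|}$ of the dropped blocks; reconciling these is what restricts the estimate to the finite horizon $T^*_{CF}$, on which the geometric decay in $N$ dominates the exponential-in-$Nt$ growth. The detailed execution of this optimization is precisely the content of \cite{Chen2024}.
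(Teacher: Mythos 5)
Your overall template is the right one, and it is essentially the template of the cited proof: the paper itself does not prove Theorem \ref{maintheoremanalytic.thm1} (it quotes it from \cite{Chen2024}), and that proof follows the same finite-section scheme as the Appendix~\ref{proof.section} proof of Theorem \ref{Carleman.thm} — exactness of the lifted solution ${\bf y}_k(t)=[e^{i\alpha_1x_1(t)+i\alpha_2x_2(t)}]_{|\alpha|=k}$, block errors ${\bf e}_k={\bf y}_{k,N}-{\bf y}_k$ with sources only in the blocks $k>N-M$, Duhamel iteration down the block triangle with geometric $R$-weights, and a final Stirling optimization. Your preliminary estimates $\|{\bf B}_{k,k+m}\|\le k(|g_m|+|g_{-m}|)$, $\|{\bf y}_l\|\le\sqrt{l+1}\,e^{l|\Im x|}$, and the identity $|y_{1,N}^{\pm}e^{\mp ix}-1|=e^{\pm\Im x}|({\bf e}_1)^{\pm}|$ are all correct.

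The genuine gap is your second step: the claim that a bootstrap confines $|\Im x(t)|\le|\Im x_0|+1$ on all of $[0,T^*_{CF}]$, and that this is the origin both of the factor $\exp(|\Im x_0|+1)/R$ and of the formula \eqref{maintheoremanalytic.thm1.eq4}. This is false in general. While $e^{|\Im x(t)|}\le R$, the drift bound is only $\big|\tfrac{d}{dt}\Im x\big|\le|g(x)|\le\sum_m|g_m|e^{|m|\,|\Im x|}$, a quantity comparable to $D_0(R)$ (it is at least $|g_0|$, which may equal $D_0(R)/2$); hence over the horizon $T^*_{CF}$ the imaginary part can move by roughly $\tfrac12 D_0(R)T^*_{CF}=\tfrac{e-1}{2(2e-1)}\ln\frac{R}{\exp(|\Im x_0|+1)}$, which exceeds $1$ once $\ln R$ is moderately larger than $|\Im x_0|+1$ — and $R$ is a free parameter. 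Concretely, for $g(x)=g_0+\epsilon e^{ix}$ with $g_0=-i$ and $R=1/\epsilon$, one has $D_0(R)=2$ and $T^*_{CF}\to\infty$ as $\epsilon\to0$, while $\Im x(t)\approx\Im x_0-t$ drifts linearly; your uniform bound $\|{\bf y}_l(t)\|\le\sqrt{l+1}\,e^{l(|\Im x_0|+1)}$ on the dropped blocks $l>N$ then fails on most of the interval and the source estimate collapses. The repair — and what the actual argument does, in parallel with Lemma \ref{boundestimate.lem} and the time-dependent choice $M_0=|x_0|\exp(C_0e^{\tilde R_0}t)$ in Appendix \ref{proof.section} — is a \emph{time-dependent} a priori bound: setting $V(t)=e^{|\Im x(t)|}$, as long as $V\le R/e$ one has $V'\le V\,\tfrac{D_0(R)}{2}\sum_{m\ge0}(V/R)^m\le cD_0(R)V$ with $c=\tfrac{e}{2(e-1)}<1$, so $V(t)\le e^{|\Im x_0|}e^{cD_0(R)t}$ on an interval that one checks contains $[0,T^*_{CF}]$; the resulting growth $e^{O(1)D_0(R)Nt}$ of the sources is then absorbed into the factor $e^{D_0(R)Nt}$ of \eqref{maintheoremanalytic.thm1.eq2}, while the ``$+1$'' in $\exp(|\Im x_0|+1)$ comes from the Stirling step (the same mechanism that produces the factor $e$ in $R_0|x_0|e/\tilde R_0$ in Theorem \ref{Carleman.thm}), not from state drift. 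Finally, you defer the chain-summation and Stirling optimization — the step that actually produces the exponent $(e-1)/(2e-1)$, the prefactor $N^{-3/2}$, and the constant $C_0$ — to \cite{Chen2024}, so even with the bootstrap repaired your text is a correct roadmap rather than a proof.
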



Take $T^{**}\le T_{CF}^*$, and select  a sufficiently large order \(N\) in the finite-section approximation \eqref{CF.eq41} such that
\begin{equation}\label{CF.eq44}
  C_0 N^{-3/2} 
 e^{ D_0(R)  N T^{**}}
 \Big( \frac{\exp (|\Im {x}_0|+1)}{R }\Big)^{ (e-1)N/(2e-1)}
 \le \frac{1}{2}.
 \end{equation}
 Then
$     \xi_{1, N}^\pm (t)= -i \ln y_{1, N}^\pm(t), \ 0\le t\le T^{**} $, with the initial
$\xi_{1, N}^\pm(0)=\pm x_0$ is well defined by \eqref{maintheoremanalytic.thm1.eq2} and \eqref{CF.eq44}.
 This together with the observation 
 $$ |z \ {\rm mod} \ 2\pi|\le  4 \epsilon \ \  {\rm for \ all } \  z\in {\mathbb C}  \ {\rm with}\  \ |e^{iz}-1|\le \epsilon\le 1/2, $$
yields  an exponential approximation to the state variable $x$ for the dynamical system \eqref{nonlineardynamical.def0} by $\xi_{1, N}^\pm $ on the time range  $[0, T^{**}]$, 
\begin{eqnarray} \label{maintheoremanalytic.cor1.eq2}
 & & \max\big(|\xi_{1, N}^+ (t)-x(t)|,\ |\xi_{1, N}^- (t)+x(t)|\big) \nonumber\\
 & \hskip-0.08in  \leq & \hskip-0.08in   4 
 C_0 N^{-3/2} 
 e^{ D_0(R)  N t}
 \Big( \frac{\exp (|\Im {x}_0|+1)}{R }\Big)^{ (e-1)N/(2e-1)}, \ \  0 \leq t \leq T^{**}.
\end{eqnarray}

\medskip

 For the Carleman-Fourier linearization \eqref{CF.eq7}  of dynamical systems with governing function having nonnegative frequecies,  we  define its finite-section approximation of order $N\ge 1$ by 
 \begin{equation} \label{CF.eq45}
\frac{d}{dt}\begin{bmatrix}
    z_{1, N} \\
 z_{2, N}
\\
 \vdots \\
 z_{N, N} 
\end{bmatrix}
=
\begin{bmatrix}
    ig_0 &  ig_1 & \dots & ig_{N-1} \\
      &2ig_0 & \cdots &  2ig_{N-2}  \\
      & & \ddots & \vdots \\
      & & & Ni g_0  \\
\end{bmatrix}
\begin{bmatrix}
    z_{1, N} \\
 z_{2, N}
\\
 \vdots \\
 z_{N, N} 
\end{bmatrix} \ \ {\rm and}
\ \ 
\begin{bmatrix}
    z_{1, N}(0) \\
 z_{2, N}(0)
\\
 \vdots \\
 z_{N, N} (0)
\end{bmatrix}=
\begin{bmatrix}
    e^{ix_0} \\
 e^{2ix_0}
\\
 \vdots \\
 e^{Nix_0}
\end{bmatrix}.
\end{equation}
The finite-section approximation \eqref{CF.eq45} of order $N\ge 1$
has solution
$z_{1, 1}(t)=e^{i(x_0+g_0t)}$
for $N=1$, and
$z_{1,2}(t)= g_1 e^{i(2x_0+g_0t)} (e^{ig_0t}-1)/g_0+e^{i(x_0+g_0t)}$
for $N=2$, cf. \eqref{CF.eq41a} and \eqref{CF.eq41b}.
As the state vector and matrix
in the dynamical system \eqref{CF.eq45}
are subvector and submatrix of the finite-section approximation  \eqref{CF.eq41} respectively, we obtain from Theorem \ref{maintheoremanalytic.thm1}
that  $z_{1, N}$
converges to the exponential $e^{ix}$ of the state vector
$x$ of the original dynamical system in a time range. 

\begin{remark}\label{global.remark}
We remark that for the case that the governing function $g$ has nonnegative frequencies only, the first component $z_{1, N}$  in \eqref{CF.eq45}
may converge to the exponential $e^{ix}$ in the entire time range $[0, \infty)$. In particular, it is shown in
\cite{Chen2024} that the above exponential convergence result for the finite-section approximation  \eqref{CF.eq45}
has  exponential convergence rate
\begin{equation} \label{ConvergencerateCarlemanFourierWhole}
\tilde r_{CF}= \frac{(D_0+\mu_0)\|\exp(i{\bf x}_0)\|_2}{\mu_0 R}<1,
\end{equation}
 under the assumption that
 the zero-th Fourier coefficient $g_0$
 and the initial $x_0$ satisfies
   \begin{equation}
\label{assumption2}  \mu_0:=\Im  g_{0} >0
\ \ {\rm and} \ \ \exp(-\Im x_0)< \frac{\mu_0 R} {D_0+\mu_0},
    \end{equation}
where $R> \exp(|\Im x_0|+1)$ and $D_0(R)=\max (|g_0|, |g_1|R, \ldots, |g_M| R^M)$, c.f. \eqref{timerange.orderoneeq1}.
Under the  assumption \eqref{assumption2}   on the governing function $g$ and the initial state $x_0$,
 imaginary part of  the state vector $x(t)$ converges to positive infinity as $t\to \infty$
 and the dynamical system associated with the new state vector $e^{ix}$ (the exponential of the original state vector $x$) is stable.
\end{remark}

For the case that the governing function $g$ is given in \eqref{simpleexample2.eq1}, 
the corresponding finite-section approximation is given by
\begin{equation} \label{CF.eq46}
\frac{d}{dt}\begin{bmatrix}
    z_{1, N} \\
 z_{2, N}
\\
 \vdots \\
 z_{N-1, N}\\
 z_{N, N} 
\end{bmatrix}
= a i
\begin{bmatrix}
    1 &  -1 & \dots & 0 & 0 \\
      &2 & \cdots &  0 & 0   \\
      & & \ddots & \vdots & \vdots \\
      & & & N-1   & -N+1 \\
      & &  & & N   \\
\end{bmatrix}
\begin{bmatrix}
    z_{1, N} \\
 z_{2, N}
\\
 \vdots \\
 z_{N-1, N}\\
 z_{N, N} 
\end{bmatrix} 
\end{equation}
with initials 
 $z_{k, N}(0)=\exp(ikx_0)$ for $1\le k\le N$.
By Theorem \ref{maintheoremanalytic.thm1}, we  conclude that
\begin{equation}\label{CF.eq47}
|z_{1, N}(t)e^{-ix(t)}-1|\le 
C_0 N^{-3/2} e^{   R N t}
 \Big( \frac{\exp (|\Im {x}_0|+1)}{R } \Big)^{ (e-1)N/(2e-1)},  \ 0\le t\le T_{CF}^*,
\end{equation}
where
$R, C_0, T_{CF}^*$ are given by \eqref{maintheoremanalytic.thm1.eq1},  \eqref{maintheoremanalytic.thm1.eq3} and \eqref{maintheoremanalytic.thm1.eq4} 
respectively with $D_0(R)$ replaced by $R\ge 1$.
From the exponential convergence rate estimate in \eqref{CF.eq47}, we observe two key properties of the finite-section approximation \eqref{CF.eq46} that  its approximation error  is {\bf unaffected} by the real component of the initial condition $x_0$, and that the approximation error {\bf decreases} as the imaginary part of $x_0$ increases.
This suggests that the finite-section approximation \eqref{CF.eq46} possesses certain {\bf global} convergence characteristics, a significant advantage over the {\bf local} convergence characteristics of Carleman linearization shown in Theorem \ref{Carleman.thm},  and achieves rapid convergence under conditions where the dynamical system admits a satisfactory constant approximation.

\section{A Case Study}
\label{case.section}

In this section, we present a detailed  case study on the finite-section approximation \eqref{CF.eq46}
when the original dynamical system 
has the governing function  given in \eqref{simpleexample2.eq1}, and also compare the  performance of finite-section approximations to its
Carleman linearization 
and Carleman-Fourier linearization.

\smallskip
Let $z_{k,N}, 1\le k\le N$, be as in \eqref{CF.eq46}.
By induction on $k=N, N-1, \ldots, 1$, it can be verified that
\begin{equation} \label{simpleexample2.eq5-}
z_{k, N}(t)= e^{ik(at+x_0)} \Sp \sum_{l=0}^{N-k} ~ \frac{(k+l-1)!}{(k-1)! \Sp l!} \Sp \big(-e^{ix_0}(e^{iat}-1)\big)^l, \ \ 1\le k\le N.
\end{equation}
Therefore the first component  $z_{1, N}$ in the finite-section approximation \eqref{CF.eq46}
has the following explicit expression
 \begin{equation} \label{simpleexample2.eq5}
z_{1, N}(t)=e^{i(at+x_0)} \sum_{l=0}^{N-1} \big(-e^{ix_0}(e^{iat}-1)\big)^l,
\end{equation}
which is
 essentially  the Taylor polynomial of order $N-1$ for the exponential function $$e^{ix(t)}=e^{i(at+x_0)} \big(1+e^{ix_0}  (e^{iat}-1)\big)^{-1}$$ of the original state function $x(t)$ in \eqref{simpleexample2.eq2}.
As a  consequence of the explicit expression in
\eqref{simpleexample2.eq5}, for any initial state $x_0$ and in the time range $[0, T^*]$,  we have explicit approximation error 
\begin{equation} \label{simpleexample2.eq5+}
|z_{1,N}(t)e^{-ix(t)}-1|=|e^{ix_0}(e^{iat}-1)|^N= e^{|\Im x_0| N} |e^{iat}-1|^N,  \ \  N\ge 1.
\end{equation}

\begin{figure}[t]  
  \centering
   \includegraphics[width=6.9cm,  height=4.8cm]{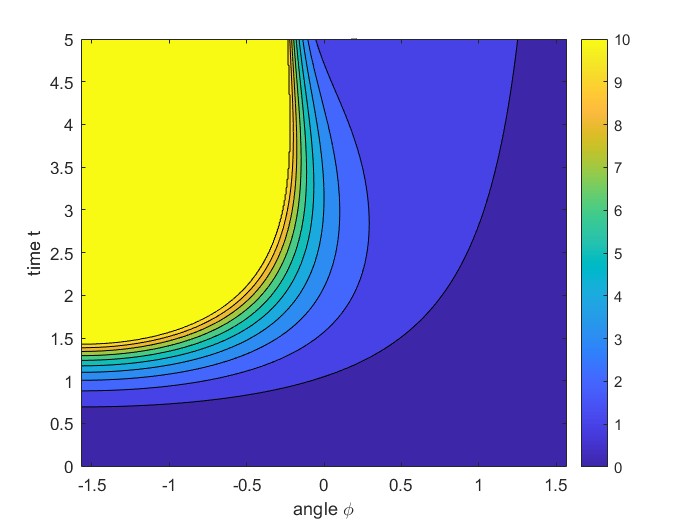}
   \includegraphics[width=6.9cm,  height=4.8cm]{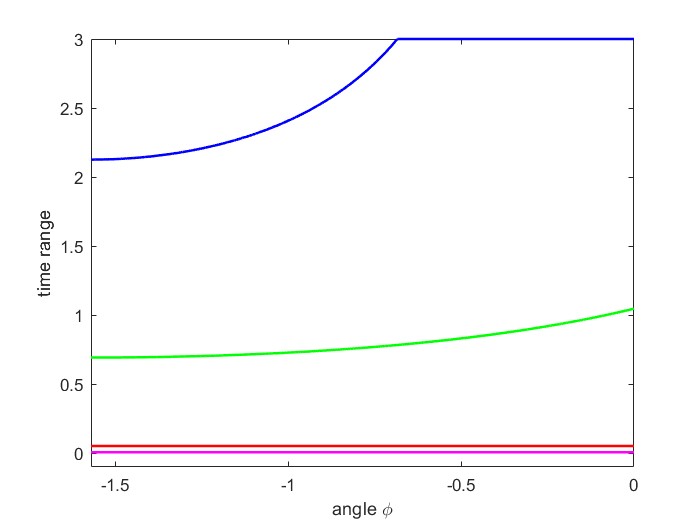}
        \captionsetup{width=1\linewidth}
\caption{Plotted on the left is  the function $\min\{ h(\varphi, t), 10\},  -\pi/2\le \varphi\le \pi/2, 0\le t\le 5$, where $h$ is given in \eqref{hh.def00}.
Presented on the right  is the actual time range $\min(T^*(\varphi), 3), -\pi/2/\le \varphi\le 0$, in \eqref{actualtimerage.orderone}, where $\Im x_0=0$ (in green) and
 $\Im x_0=2$ (in blue), and
the  time range $T_{CF}^*$ in Theorem  \ref{maintheoremanalytic.thm1} 
 when $\Im x_0=0$ (in red) and
 when $\Im x_0=2$ (in magenta).
}
  \label{fig:solu1}
\end{figure}

Write $a=e^{i\phi}$ as  \eqref{simpleexample2.eq2a} and define  
\begin{equation}
\label{hh.def00}
h(\phi, t)=|e^{iat}-1|^2= e^{-2t \sin \phi}-2 e^{-t\sin \phi} \cos(t\cos \phi)+1, \ t\ge 0,
\end{equation}
see the left plot of Figure \ref{fig:solu1} for the function $\min (h(\phi, t), 10), -\pi/2\le \phi\le \pi/2, 0\le t\le 5$.
 By \eqref{simpleexample2.eq5+},
  the first block, $z_{1, N}(t), N\ge 1$, in the finite-section approximation \eqref{CF.eq46} exhibits exponential convergence to $e^{ix(t)}$ in the time interval $[0, T^*)$ {\em if and only if}  the requirement
\begin{equation} \label{simpleexample2.eq6}
h(\phi, t) < \exp(2 \Im x_0) \quad \text{for all} \quad 0\le t <  T^*
\end{equation}
is satisfied, c.f.   \eqref{CF.eq47}.

In the case that  $\phi\in [-\pi/2, 0)$,  the function $ h(\phi, t), t\ge 0$ is unbounded. Therefore,  for any initial state $x_0$,
the actual time range
\begin{equation} \label{actualtimerage.orderone}
T^*(\phi)=\max \{T^*\ |\    \eqref{simpleexample2.eq6}\  {\rm   holds}\}
\end{equation}
for the convergence of $z_{1, N}(t), N\ge 1$, is finite. Illustrated
in  the right plot of Figure \ref{fig:solu1}
 are the maximal time range $\min (T^*(\phi), 5), -\pi/2\le \phi\le 0$, for $\Im x_0=0$ (in green) and $\Im x_0=2$ (in blue).
We remark that  the
 time range $T_{CF}^*$ in \eqref{maintheoremanalytic.thm1.eq4}, per  Theorem \ref{maintheoremanalytic.thm1}, is given by
\begin{eqnarray}\label{maximaltimerage.orderone}
T_{CF}^*& \hskip-0.08in = & \hskip-0.08in \sup_{\ln R>|\Im x_0|+1} \frac{e-1}{(2e-1) R} (\ln R-|\Im x_0|-1) = \frac{e-1}{(2e-1)}  e^{- |\Im x_0|-2}
\end{eqnarray}
see the right plot of Figure \ref{fig:solu1}, where
 $T_{CF}^*\approx 0.0524$ for $\Im x_0=0$ (in red) and
$T_{CF}^*\approx  0.0071$ 
for $\Im x_0=2$ (in magenta). 
We observe that  the time range $T_{CF}^*$ in \eqref{maintheoremanalytic.thm1.eq4} is independent on the selection of $a=\exp(i\phi)$, and it is much smaller than the actual time range $T^*(\phi), -\pi/2\le \phi<0$, for the exponential convergence of the finite-section approximation to the Carleman-Fourier  linearization.
Accordingly, the theoretical bound on
 the time range $T_{CF}^*$ in \eqref{maintheoremanalytic.thm1.eq4}
 should be considered as a conservative guarantee of exponential convergence for the finite-section approximation, rather than as a precise or sharp description of the actual time range.

For the scenarios when $\phi=0$, implying $a=1$ or equivalently $\Im a=0$, it can be verified that the maximum time range for the convergence of $z_{1, N}(t)$ can be evaluated explicitly,
\begin{equation*}
T^*(\phi)=\left\{\begin{array}{ll}
2 \arcsin \frac{\exp(\Im x_0)}{2} & {\rm if} \quad  \Im x_0\le \ln 2\\
+\infty & {\rm otherwise}.\end{array} \right.
\end{equation*}
Illustrated in the right plot of
 Figure \ref{fig:solu1} is
 $T^*(0)\approx \pi/3\approx 1.0472$ for $\Im x_0=0$ (in red) and
$T^*(0)=+\infty$ for $\Im x_0=2$ (in blue).

 For the case when $\phi\in (0, \pi/2]$, we have $0\le h(\phi,t)\le 4$.
 Using \eqref{simpleexample2.eq5}, we can conclude that  $z_{1, N}(t), N\ge 1$ in the finite-section approximation \eqref{CF.eq46} provides a satisfactory approximation to $e^{ix(t)}$ over the {\bf entire time range} $[0, \infty)$, provided that
 \begin{equation}\label{timerange.orderoneeq1} \Im x_0>\frac{1}{2} \ln h(\phi, t) \ {\rm for \ all} \ t\ge 0,\end{equation}
 which is the region above the green line on the right plot of Figure \ref{fig:solu1}, c.f. Remark \ref{global.remark}.

 Figures \ref{fig:cferrorOrderOne} depicts the approximation performance of the finite-section approach \eqref{CF.eq46}, where $a=e^{i\phi}$ and
\begin{eqnarray}
\label{fouriererror.def}
E_{CF}(x_0, T^*, N) &\hskip-0.08in  = & \hskip-0.08in  \max_{0\le t\le T^*} \log_{10}  \big| z_{1, N}(t) e^{-ix(t)}-1 \big|\nonumber\\
 & \hskip-0.08in  = & \hskip-0.08in
N \big(-\Im x_0\log_{10} e+ \frac{1}{2}\log_{10} \big(\max_{0\le t\le T^*}  h(\phi, t)\big)\big).
\end{eqnarray}
This demonstrates that the first component $z_{1, N}(t)$ in the finite-section approximation \eqref{CF.eq46} provides a better approximation to the original state $x(t)$ of the dynamical system \eqref{simpleexample2.eq1} in a longer time range when $\phi\in [-\pi/2, 0)$ and in the whole time range $[0, \infty)$ when $\phi\in (0, \pi/2]$, provided that  the imaginary  $\Im x_0$ of initial state $x_0$ takes larger value.
It is also observed that the proposed Carleman-Fourier linearization  has
better performance for the
complex dynamical system \eqref{simpleexample2.eq1} with the parameter $a$ having positive  imaginery part
than for  the one with the parameter $a$ having negative imaginery part. We believe that the possible reason
is that  the  finite-section approximation
\eqref{CF.eq46}  associated with the Carleman-Fourier linearization of the corresponding  dynamical system
 is stable when $\Im a<0$, while it is unstable when $\Im a>0$.

\begin{figure}[h] 
  \centering
    \includegraphics[width=4.9cm, height=3.8cm]{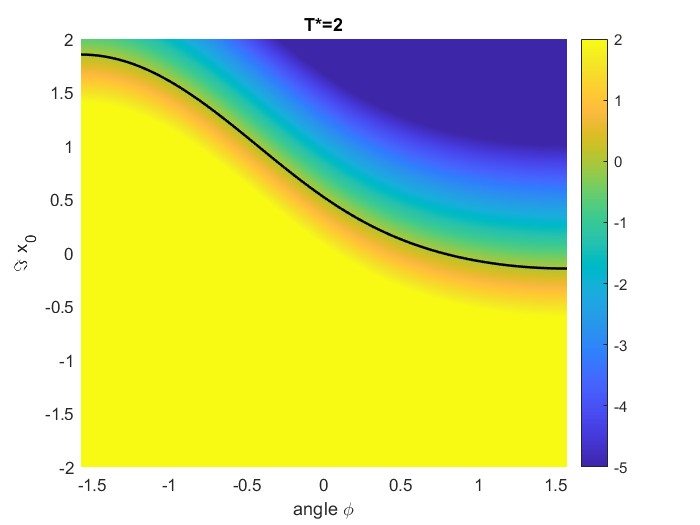}
    \includegraphics[width=4.9cm, height=3.8cm]{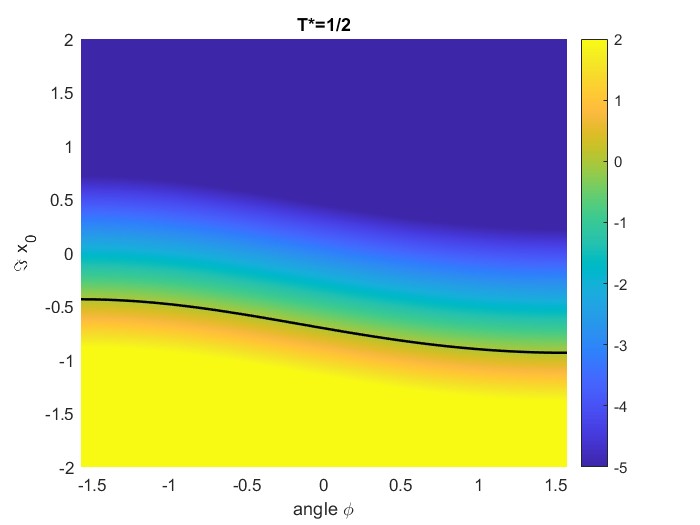}
    \includegraphics[width=4.9cm, height=3.8cm]{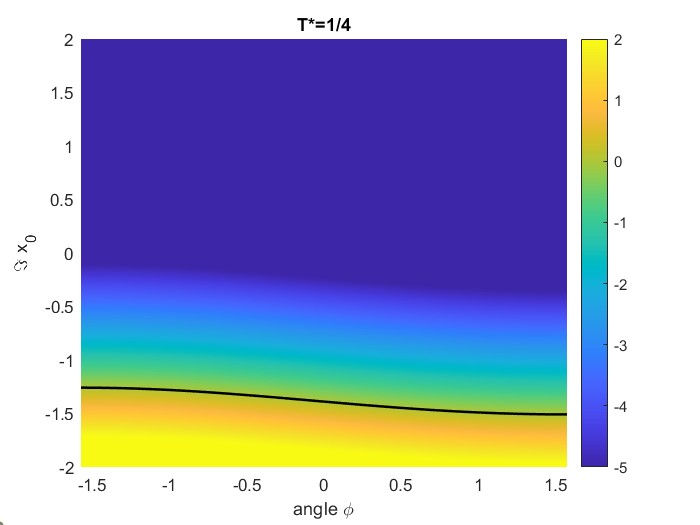}
    \\
\includegraphics[width=4.9cm, height=3.8cm]{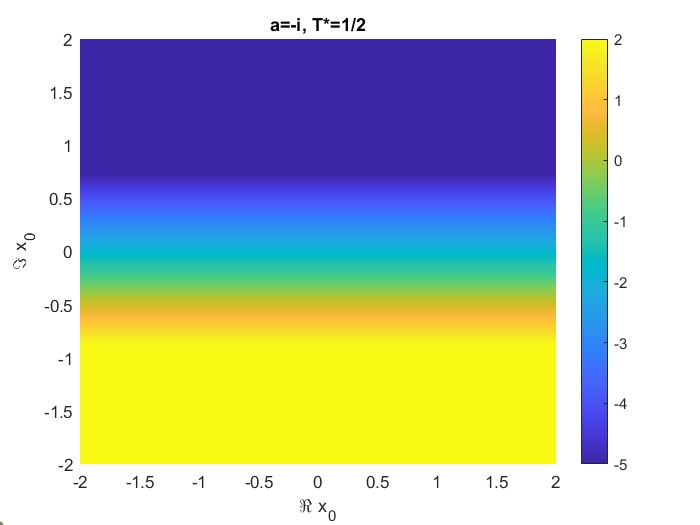}
    \includegraphics[width=4.9cm, height=3.8cm]{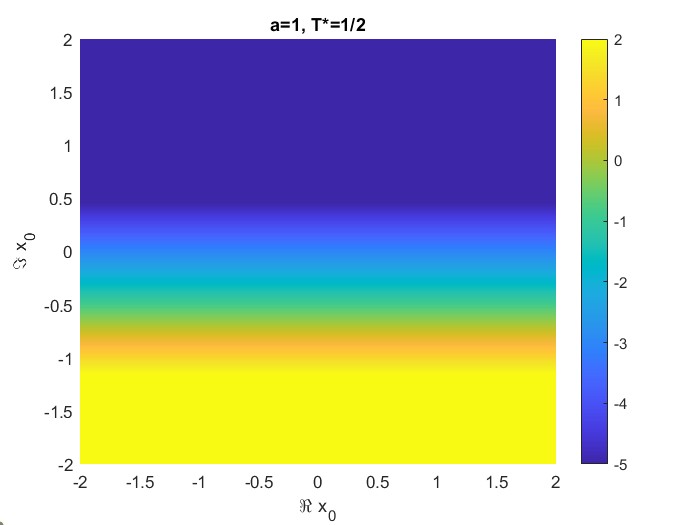}
\includegraphics[width=4.9cm, height=3.8cm]{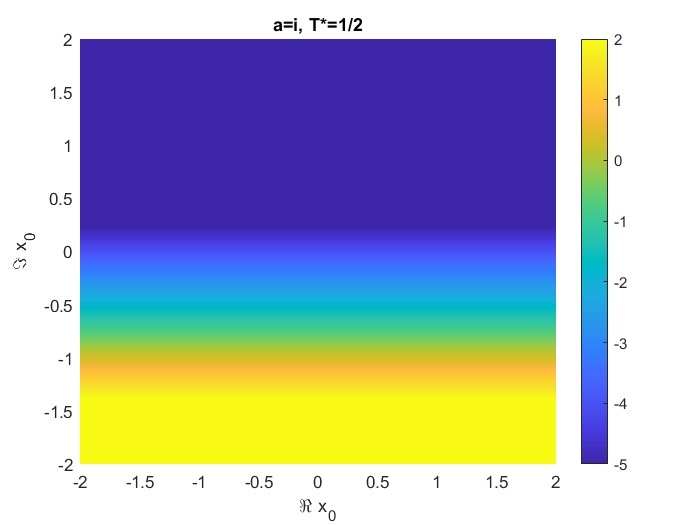}
         \caption{Plotted  on the top are
          the finite-section approximation errors $\max(\min(E_{CF}(x_0, T^*, N), 2), -5)$ with  $\-\pi/2\le\phi\le \pi/2$  as the $x$-axis
          and $-2\le \Im x_0\le 2$ as the $y$-axis,
          and level curve $E_{CF}(x_0, T^*, N)=0$ (in black)
          for $N=10$ and  $T^*=2$ (left),
           $1/2$ (middle) and $1/4$ (right) respectively. Shown on the bottom are
         $\max(\min(E_{CF}(x_0, T^*, N), 2), -5)$ with  $-2\le \Re x_0 \pi/2$  as the $x$-axis
          and $-2\le \Im x_0\le 2$ as the $y$-axis,
          for $N=10, T^*=1/2$ and  $\phi=-\pi/2$ (left),
           $0$ (middle) and $\pi/2$ (right) respectively. }
  \label{fig:cferrorOrderOne}
\end{figure}

\subsection{Comparison between  Carleman  and   Carleman-Fourier linearization }

Our numerical simulations show that the finite-section approximation of the Carleman-Fourier linearization
 exhibits  exponential convergence on the entire range if $\Im a\ge 0$ and  $\Im x_0>\ln 2$,
while   the finite-section approximation of  the Carleman linearization
has exponential convergence on the entire range when $\Im a<0$.
The possible reason  is that the dynamical system \eqref{simpleexample2.eq1} associated with the finite-section approximation of
the Carleman-Fourier linearization is stable when $\Im a>0$, while
the dynamical system  \eqref{simpleexample2.eq1} associated with  the finite-section approximation of the Carleman linearization is
stable when $\Im a<0$.

Comparing the performance between the Carleman-Fourier linearization and the Carleman linearization shown in Figures \ref{classicalcarleman}
and \ref{fig:cferrorOrderOne}, we see that the proposed Carleman-Fourier linearization has  much better performance than the Carleman linearization
 when $\Im x_0$ is large and the governing field is well approximated by trigonometric polynomials, while the  Carleman linearization, as expected,
  is a superior linearization technique of  a nonlinear dynamical system when the initial is not far from the origin.

\begin{appendix}

\section{A dynamical system with periodic governing function}
\label{example.section}

In this appendix, we consider the trajectory behaviour of
 the  dynamical system \eqref{simpleexample2.eq1}, which 
 could blow up at a finite time, exhibit a limit cycle, and converge or diverge, see
 Figure \ref{fig:solution1}.

By \eqref{simpleexample2.eq2}, the solution $x(t)$ of the complex dynamical system \eqref{simpleexample2.eq1} may blow up at a finite time $t=t_0>0$ if the initial $x_0$ satisfies
\begin{equation}\label{simpleexample2.eq2b}
    1 +(e^{ait_0} - 1)e^{ix_0} = 0 \quad \text{and} \quad 1 +(e^{ait} - 1)e^{ix_0} \neq 0 \  \ {\rm for \ all} \ \ 0\le t< t_0,
\end{equation}
 see the black trajectories shown in Figure \ref{fig:solution1}
where simulation parameters $x_0=i\ln  (1-e^{ai\pi/2})$ for $a=1,  i, -i$ respectively.
 One may verify that the requirement \eqref{simpleexample2.eq2b} for the initial state vector $x_0$ is met for some $t_0 > 0$ when $\Re(e^{ix_0}) = \frac{1}{2}$ and $a= 1$, or when $\Re x_0\in 2\pi {\mathbb Z}+\pi$ and $a=-i$, or when $\Re x_0\in 2\pi {\mathbb Z}$ and $\Im x_0<0$ and $a=i$.

\begin{figure}[h] 
  \centering
    \includegraphics[width=4.9cm, height=4.5cm]{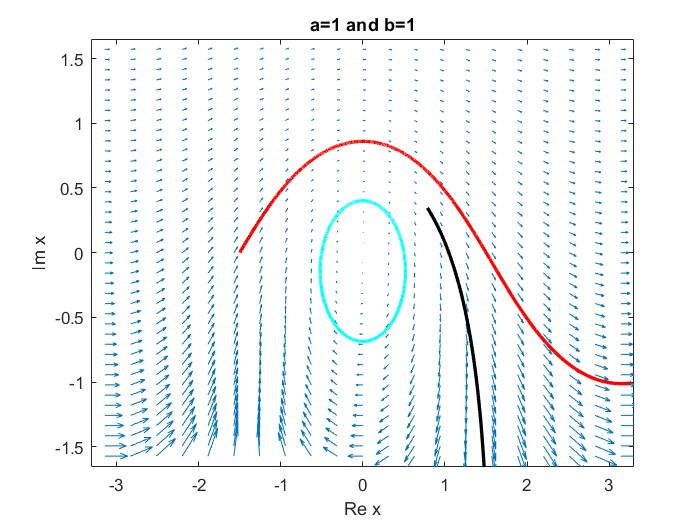}
       \includegraphics[width=4.9cm, height=4.5cm]{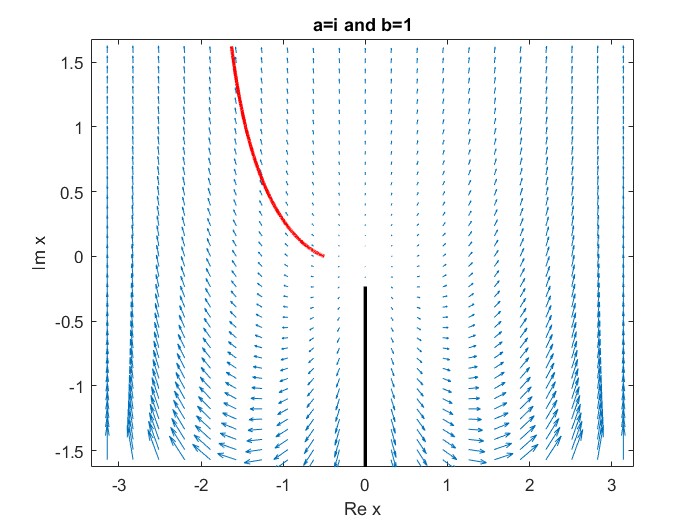}
       \includegraphics[width=4.9cm, height=4.5cm]{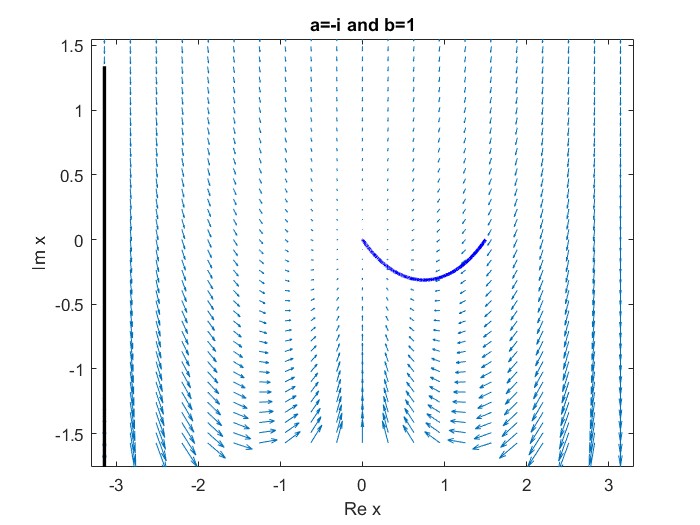}
        \captionsetup{width=1\linewidth}
\caption{Plotted are the vector fields $ a(1-e^{ix})$ of the complex dynamical system \eqref{simpleexample2.eq1} with  $a=1$ (left),  
$a=i$ (middle) and $a=-i$ (right), where $-\pi\le \Re x\le \pi$ and $-\pi/2\le \Im x\le \pi/2$. Trajectories
on the left figure has parameter $a=1$ (left) 
   and initial
 $x_0= i\ln  (1-e^{ai\pi/2})\approx 0.7854 a + 0.3466i$ (in black), $-1/2a$ (in cyan) and $-3/2a$ (in  red).
 Presented in the middle is trajectories with $a=i$ and $x_0= i\ln  (1-e^{ai\pi/2}) \approx - 0.2330i$ (in black) and $-1/2$ (in red),
 while on the right are trajectories with $a=-i$ and $x_0= i\ln  (1-e^{ai\pi/2}) \approx -3.1416 + 1.3378i$ (in black) and $3/2$ (in blue).
 Trajectories shown in the figures may
 blow up in a finite time (in black), have limit cycle (in cyan), converge (in  blue) and  diverge (in red).
}
  \label{fig:solution1}
\end{figure}

Now we continue examining the behavior of the dynamical system \eqref{simpleexample2.eq1} when the initial vector $x_0$ does not satisfy condition \eqref{simpleexample2.eq2b} for all $t_0> 0$, i.e., $1 +(e^{ait} - 1)e^{ix_0}\ne 0$ for all $t\ge 0$.
For the case that $\Im a=0$, i.e., $a=1$.
 we observe that $e^{-ix_0}(1+(e^{it}-1)e^{ix_0}), t\ge 0$, is a  circle with center  $e^{-ix_0}-1$ and radius 1.
Therefore  $\ln \left(1-(e^{it}-1)e^{ix_0}\right)$ is a periodic function with a period of $2\pi$ when $|e^{-ix_0}-1|>1$, and
$\ln \left(1-b(e^{it}-1)\right)-it$ is a periodic function with the same period of $2\pi$ when  $|e^{-ix_0}-1|<1$. This implies that when $a=1$, the dynamical system \eqref{simpleexample2.eq1} diverges when $|e^{-ix_0}-1|<1$ and exhibits a limit cycle when $|e^{-ix_0}-1|>1$.
 These behaviors are illustrated by the cyan color limit cycle trajectory in Figure \ref{fig:solution1} with a period of $2\pi$ and the red color trajectory in Figure \ref{fig:solution1}, where $x(t)-t$ forms a periodic function with a period of $2\pi$.

For the case that $\Im a\ne 0$, we observe that
(i) $\lim_{t\to \infty} 1+e^{ix_0}(e^{ait}-1)=1-e^{ix_0}$ when $\Im a>0$; and (ii)
  $\lim_{t\to \infty} e^{-iat} \left(1+e^{ix_0}(e^{ait}-1)\right)=e^{ix_0}$ when $\Im a<0$.
Therefore, the dynamical system \eqref{simpleexample2.eq1} converges when $\Im a<0$, diverges  when $\Im a>0$ and $x_0\not\in 2\pi {\mathbb Z}$, and the solution of the dynamical system \eqref{simpleexample2.eq1} remains at the equilibrium $x_0\in 2\pi {\mathbb Z}$.
 This behavior is illustrated by the green color trajectory in the right plot of  Figure \ref{fig:solution1},  
 and the  red color trajectory in the middle plot of Figure \ref{fig:solution1}. 

\section{Proof of Theorem \ref{Carleman.thm} }
\label{proof.section}

In this appendix, we adopt the procedure outlined in \cite{Amini2022} to establish Theorem \ref{Carleman.thm},  under modified assumptions on the Taylor coefficients of the governing function.
First, we need a local bound estimate for the state vector $x$
of the dynamical system \eqref{Carleman.thm.eq0}.

\begin{lemma} \label{boundestimate.lem} For any given $M_0>|x_0|$, we have
\begin{equation} \label{proof.eq1}
|x(t)|\le M_0 \ \  {\rm for \ all}\  \  0\le t\le \frac{M_0R_0}{C_0(e^{R_0M_0}-1)} \ln \frac{M_0}{|x_0|}, 
\end{equation}
 where  $C_0$ and $R_0$ are constants in \eqref{Carleman.thm.eq1}.
\end{lemma}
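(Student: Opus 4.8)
\emph{The plan is to} reduce the nonlinear growth estimate to a linear Gronwall-type differential inequality that is valid only on the sublevel region $\{|x|\le M_0\}$, and then to close the argument by a continuity bootstrap. Throughout I would take for granted short-time existence and uniqueness of the solution $x(t)$, which follows from the analyticity of $f$ near the origin.

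\emph{First I would} control the growth of the modulus $|x(t)|$. Since $x$ solves \eqref{Carleman.thm.eq0}, the upper right Dini derivative satisfies $D^+|x(t)|\le |\dot x(t)|=|f(x(t))|$ for all $t$; this phrasing sidesteps the non-differentiability of $|x|$ at possible zeros of $x$. I would then bound $|f|$ directly from the coefficient hypothesis \eqref{Carleman.thm.eq1}: summing the resulting exponential-type series gives
\begin{equation*}
|f(x)|\le \sum_{n\ge 1}|c_n|\,|x|^n\le \frac{C_0}{R_0}\sum_{n\ge 1}\frac{(R_0|x|)^n}{n!}=\frac{C_0}{R_0}\big(e^{R_0|x|}-1\big).
\end{equation*}

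\emph{Next I would} linearize this bound on the region $|x|\le M_0$. Using that $s\mapsto (e^{s}-1)/s$ is increasing on $(0,\infty)$, for $0\le |x|\le M_0$ one has $e^{R_0|x|}-1\le \tfrac{e^{R_0M_0}-1}{M_0}\,|x|$, hence, writing $K:=\tfrac{C_0(e^{R_0M_0}-1)}{R_0M_0}$,
\begin{equation*}
D^+|x(t)|\le K\,|x(t)|\qquad\text{whenever } |x(t)|\le M_0.
\end{equation*}
The \emph{final step} is the bootstrap. Let $T^{\#}=\sup\{\tau\ge 0:\ |x(t)|\le M_0 \text{ for all } t\in[0,\tau]\}$. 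On $[0,T^{\#})$ the linear inequality holds, so the Dini comparison (Gronwall) gives $|x(t)|\le |x_0|e^{Kt}$. The asserted endpoint $T=\tfrac{1}{K}\ln\tfrac{M_0}{|x_0|}=\tfrac{M_0R_0}{C_0(e^{R_0M_0}-1)}\ln\tfrac{M_0}{|x_0|}$ is exactly the time at which $|x_0|e^{KT}=M_0$. If $T^{\#}<T$, then by continuity $|x(T^{\#})|=M_0$, while the bound yields $|x(T^{\#})|\le |x_0|e^{KT^{\#}}<|x_0|e^{KT}=M_0$, a contradiction; thus $T^{\#}\ge T$ and $|x(t)|\le M_0$ on $[0,T]$.

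\emph{The main obstacle} I anticipate is making the bootstrap rigorous together with the non-smoothness of $|x(t)|$ at its zeros: one must justify the comparison principle for the Dini derivative and verify that $T^{\#}$ is actually attained with $|x(T^{\#})|=M_0$ (rather than the solution ceasing to exist earlier), which is where short-time existence from analyticity is invoked. The algebraic simplification turning the exponential bound into a linear one via monotonicity of $(e^s-1)/s$ is routine.
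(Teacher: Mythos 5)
Your proof is correct and takes essentially the same approach as the paper: both localize the coefficient hypothesis on the region $|x|\le M_0$ to obtain a linear Gronwall inequality with the identical constant $K=\tfrac{C_0(e^{R_0M_0}-1)}{R_0M_0}$, and both close by a first-exit-time/continuity argument. The only differences are cosmetic — you use the Dini-derivative (differential) form of Gronwall and a contradiction at $T^{\#}$, whereas the paper works with the integral inequality $|x(t)|\le |x_0|+K\int_0^t|x(s)|\,ds$ and concludes directly from $|x(T)|=M_0$ that $T\ge \tfrac{1}{K}\ln\tfrac{M_0}{|x_0|}$.
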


\begin{proof}
By the continuity of the state vector $x$, either $|x(t)|\le M_0$ for all $t\ge 0$ or there exists some $T>0$ such that
$|x(t)|\le M_0$ for all $0\le t\le T$ and $|x(T)|=M_0$.
Clearly it suffices to prove 
\begin{equation} \label{proof.eq1+}
    T\ge \frac{M_0R_0}{C_0(e^{R_0M_0}-1)} \ln \frac{M_0}{|x_0|}
\end{equation}
for the second case. 
Observe that
\begin{eqnarray*}
    |x(t)|&\hskip-0.08in \leq & \hskip-0.08in |x_0|+\int_{0}^t|f(s)|ds
    \le |x_0|+\sum_{n=1}^{\infty}|c_n| \int_{0}^t |x(s)|^nds\\
    & \hskip-0.08in \leq  &  \hskip-0.08in |x_0|+ \sum_{n=1}^{\infty}\frac{C_0R_0^{n-1}}{n!}M_0^{n-1}
    \int_{0}^t |x(s)| ds\\
    & \hskip-0.08in = & \hskip-0.08in |x_0|+\frac{C_0(e^{M_0 R_0}-1)}{M_0R_0}    \int_{0}^t |x(s)| ds \ \ {\rm for \ all}    \ 0\le t\le T, \end{eqnarray*}
which implies that
$$ \exp\Big(- \frac{C_0(e^{M_0R_0}-1)}{M_0 R_0} t\Big)  
\Big(\int_0^t |x(s)|ds+  \frac{M_0R_0|x_0|}{C_0(e^{M_0R_0}-1)}\Big)
$$
is a decreasing function on $[0, T]$. Hence
$$|x(t)|\le |x_0|  \exp\Big(\frac{C_0(e^{M_0R_0}-1)}{M_0R_0} t\Big) \ \ {\rm for  \ all }\ \  0\le t\le T. $$
This together with  the assumption $|x(T)|=M_0$
of the second case proves \eqref{proof.eq1+} and completes the proof.
\end{proof}

Now we start the proof of Theorem \ref{Carleman.thm}.

\begin{proof}[Proof of Theorem \ref{Carleman.thm}]
Let $x_{k, N}, 1\le k\le N$, be as in \eqref{simpleexample2.eq7}  and set
$y_k(t)=(x(t))^k-x_{k,N}(t), 1\le k\le N$. Then, $y_k, 1\le k\le N$, satisfy the following ODEs,
\begin{equation*}
    \frac{dy_k}{dt}=kc_1y_k+\sum_{n=k+1}^Nkc_{n-k+1}y_n+\sum_{n=N+1}^{\infty}kc_{n-k+1}x^n
\end{equation*}
with initials $y_k(0)=0$. Multiplying $\exp(-c_1kt)$ 
at both sides 
of the above ODEs 
and then integrating, we obtain
\begin{eqnarray*}
    y_k(t)=k\int_{0}^te^{c_1k(t-s)}\Bigg[\sum_{n=k+1}^Nc_{n-k+1}y_n(s)+\sum_{n=N+1}^{\infty}c_{n-k+1}(x(s))^n\Bigg]ds.
\end{eqnarray*}

Take $M_0>|x_0|$ and set $T= \frac{M_0R_0}{C_0(e^{M_0R_0}-1)} \ln \frac{M_0}{|x_0|}$.
Therefore for all $0\le t\le T$ and $1\le k\le N$, we obtain from 
\eqref{Carleman.thm.eq1} and Lemma \ref{boundestimate.lem} 
that 
\begin{eqnarray}\label{proof.eq2}
    |y_k(t)|
    &\hskip-0.08in \leq &  \hskip-0.08in C_0 k \int_{0}^te^{C_0k  (t-s)}\Bigg[\sum_{n=k+1}^N\frac{R_0^{n-k}}{(n-k+1)!}|y_n(s)|+\sum_{n=N+1}^{\infty}\frac{M_0^n R_0^{n-k }}{(n-k+1)!}\Bigg]ds\nonumber\\
    & \hskip-0.08in \leq& \hskip-0.08in 
       C_0 k  \int_{0}^te^{C_0k(t-s)}  
        \Bigg[  \sum_{n=k+1}^N\frac{R_0^{n-k}}{(n-k+1)!} |y_n(s)|+
     \frac{(e^{M_0R_0}-1) M_0^{N} R_0^{N-k}}{  (N+1-k)!}
    \Bigg] ds,
\end{eqnarray}
where the last estimate holds as
\begin{eqnarray*}
& & \sum_{n=N+1}^\infty \frac{M_0^n R_0^{n-k} }{(n-k+1)!} =  \sum_{m=0}^\infty \frac{M_0^{m+N+1} R_0^{m-k+N+1} }{(N+1+m-k+1)!}\\
& \le & \frac{M_0^{N} R_0^{N-k}}{(N+1-k)!}
\sum_{m=0}^\infty \frac{M_0^{m+1} R_0^{m+1} }{(m+1)!}
 = \frac{(e^{M_0R_0}-1) M_0^{N} R_0^{N-k}}{  (N+1-k)!}.
\end{eqnarray*}

Let $\tilde R_0=\max(1,  R_0|x_0|e^2)$ as in  \eqref{tildeR0.def}  and set 
$$u_k(t)= (e^{M_0R_0}-1)^{-1} M_0^{-N} (R_0/\tilde R_0)^{k-N}  |y_k(t)|,\  1\le k\le N.$$
Then, for $1\le k\le N$, we obtain from 
\eqref{proof.eq2} that
\begin{eqnarray}  \label{proof.eq3}
 u_k(t) & \hskip-0.08in \le & \hskip-0.08in   
  C_0 k K  \int_{0}^te^{C_0k(t-s)}  
        \Big[  \sum_{n=k+1}^N  u_n(s)+1\Big]
      ds\nonumber\\
       & \hskip-0.08in \le & \hskip-0.08in  
        C_0 k K  \int_{0}^te^{C_0k K (t-s)}  
        \Big[  \sum_{n=k+1}^N  u_n(s)+1\Big]
      ds,
\end{eqnarray}
where 
\begin{equation} \label{proof.eq4}
1\le K:=\sup_{m\ge 0} \frac{\tilde R_0^m}{(m+1)!}\le 
\sum_{m= 0}^\infty \frac{\tilde R_0^m}{(m+1)!}=
\frac{e^{\tilde R_0}-1}{ \tilde R_0}\le  
e^{\tilde R_0}/\tilde R_0.\end{equation}

Taking $k=N$ in \eqref{proof.eq3} gives
\begin{equation*} 
u_{N}(t)+1\le       e^{C_0KNt}, \ \ 0\le t\le T. 
\end{equation*}
By induction on $k=N, N-1, \ldots, 2$, we can show that
\begin{equation} \label{proof.eq5}\sum_{n=k}^N u_n(t)+1\le \frac{N^{N-k}}{(N-k)!} e^{C_0KNt}, \ 0\le t\le T.
\end{equation}
Substituting the above estimate with $k=2$ into the right hand side of the estimate
\eqref{proof.eq3} with $k=1$ gives
\begin{eqnarray}  \label{proof.eq6}
 u_1(t) 
& \hskip-0.08in \le & \hskip-0.08in          C_0  K  \frac{N^{N-2}}{(N-2)!} 
        \int_{0}^t e^{C_0  K (t-s)}  e^{C_0KNs}
      ds\nonumber\\
& \hskip-0.08in \le & \hskip-0.08in  \frac{N^{N-2}}{(N-1)!} e^{C_0KNt}\le (2\pi)^{-1/2} N^{-3/2} e^{N+C_0KNt},\ \ 0 \le t\le T,
      \end{eqnarray}
    where we use the Stirling inequality $N!> \sqrt{2\pi} N^{N+1/2} e^{-N}$.
    Therefore 
    \begin{equation} \label{proof.eq7}
    |y_1(t)|
    \le  \frac{(e^{M_0R_0}-1)\tilde R_0}{\sqrt{2\pi} R_0 }  N^{-3/2} 
   \Big (\frac{M_0R_0 e}{\tilde R_0} e^{C_0 e^{\tilde R_0} t/\tilde R_0 } \Big)^N.
    \end{equation}

   Let $T^*$ be as in \eqref{Tstar.def},  fix     $0<t< T^*$, and set
    $M_0=|x_0| \exp(C_0 e^{\tilde R_0} t)$.
    Then 
    $M_0 R_0\le R_0|x_0| e^2 \le \tilde R_0$
    and 
     $$ \frac{M_0R_0}{C_0(e^{M_0R_0}-1)} \ln \frac{M_0}{|x_0|}\ge \frac{M_0R_0 e^{\tilde R_0}}{e^{M_0R_0}-1}  t\ge t.$$
Therefore
$$|x(s)|\le M_0 \ \ {\rm for \ all} \ \ 0\le s\le t$$
by Lemma \ref{boundestimate.lem}.
Then applying \eqref{proof.eq7} with the above selection of $M_0$ yields
$$|y_1(s)|\le \frac{ \tilde R_0 e^{\tilde R_0}}{\sqrt{2\pi} R_0} N^{-3/2}
\Big(\frac{ R_0|x_0| e}{\tilde R_0} e^{C_0 e^{{\tilde R}_0} (t+ s /\tilde R_0) } \Big)^N \ \ {\rm for \ all}\ \ 0\le s\le t.$$
The desired estimate \eqref{Carleman.thm.eq2} follows from the above inequality by taking $s=t$.
 \end{proof}

\end{appendix}


\begin{thebibliography}{99}

\bibitem{abdia2023} M. Abudia, J. A. Rosenfeld and R. Kamalapurkar, Carleman lifting for nonlinear system identification with guaranteed error bounds,
{\em 2023 American Control Conference (ACC)}, IEEE, 2023, pp. 929--934.  


\bibitem{Amini2022} A. Amini,  C. Zheng, Q. Sun and N. Motee,
 Carleman linearization of nonlinear systems and its finite-section approximations,
 {\em Discrete Con. Dyn. Ser B.}, vol. 30(2), 2025, pp. 577--603.


\bibitem{amini2021error}
A. Amini, Q. Sun and N. Motee, Error bounds for Carleman linearization of general nonlinear
systems, {\em 2021 Proc. the Conference on Control and its Applications}, SIAM, 2021, pp. 1--8.

\bibitem{amini2020approximate}
A. Amini,  Q. Sun and N. Motee,
 Approximate optimal control design for a class of nonlinear systems by lifting Hamilton-Jacobi-Bellman equation,
{\em 2020 American Control Conference (ACC)}, IEEE, 2020, pp. 2717--2722.

\bibitem{amini2020quadratization}
A. Amini, Q. Sun and N. Motee, Quadratization of Hamilton-Jacobi-Bellman equation for near-optimal control of nonlinear systems,
 {\em 59th IEEE Conference on Decision and Control (CDC)}, IEEE, 2020, pp. 731--736.

\bibitem{Akiba2023}  T.
Akiba, Y.  Morii  and K.  Maruta, Carleman linearization approach for chemical kinetics integration toward quantum computation,
{\em Sci. Rep.}, vol. 13(1), 2023, article no. 3935, 10 pp.

\bibitem{Belabbas2023} M.-A. Belabbas and  X. Chen,
A sufficient condition for the super-linearization of polynomial systems,
{\em Control Syst. Lett.},  vol. 179,  2023, article no. 105588, 6 pp.


\bibitem{brockett2014early}
R. Brockett,
The early days of geometric nonlinear control,
{\em Automatica}, vol.  50(9), 2014, pp. 2203--2224.






\bibitem{Brunton2016} S. L.
Brunton,  J. L.  Proctor  and J. N.  Kutz,
 Discovering governing equations from data by sparse identification of nonlinear dynamical systems,
{\em Proc. Natl. Acad. Sci. U.S.A.}, vol. 113(15), 2016, pp. 3932--3937.

\bibitem{Chen2024}
P. Chen, N. Motee and Q. Sun, Carleman-Fourier linearization of complex dynamical systems: convergence and explicit error bounds,  arXiv:2411.11598, 2024.








\bibitem{forets2017explicit} M.
Forets and A.  Pouly,
Explicit error bounds for Carleman linearization, 2017,
{\em arXiv:1711.02552}, 2017.

\bibitem{forets2021reachability} M.
Forets and C.  Schilling, Reachability of weakly nonlinear systems using Carleman linearization,
{\em International Conference on Reachability Problems},  Springer, 2021, pp. 85--99.



\bibitem{Gonzalez2025}
J. Gonzalez-Conde, D. Lewis,  S. S. Bharadwaj and M. Sanz,
Quantum Carleman linearization efficiency in nonlinear fluid dynamics, {\em Phys. Rev. Research}, vol.  7, 2025, article no. 023254, 16 pp.



\bibitem{harshana2025}
A. Harshana and M.-A. Belabbas, 
On the invariance of super-linearization under polynomial automorphisms, 
arXiv:2503.13849, 2025.






\bibitem{hashemian2015fast} N.
Hashemian and A.  Armaou,
Fast moving horizon estimation of nonlinear processes via Carleman linearization,
 {\em 2015 American Control Conference (ACC)}, IEEE, 2015, pp. 3379--3385.





\bibitem{Korda2018} M. Korda  and I. Mezi\'{c}, Linear predictors for nonlinear dynamical systems: Koopman operator meets model predictive control,
{\em Automatica}, vol. 93, 2018, pp. 149--160.

\bibitem{KordaMezic2018}  M.
Korda and I.  Mezi\'{c},
On convergence of extended dynamic mode decomposition to the Koopman operator,
{\em J. Nonlinear Sci.}, vol. 28, 2018, pp. 687--710.

\bibitem{kowalski1991nonlinear} K. Kowalski  and W.H. Steeb,
 {\em Nonlinear Dynamical Systems and Carleman Linearization},  World Scientific, 1991.



\bibitem{krener1974linearization}  A. J.
Krener,
Linearization and bilinearization of control systems,
{\em Proceedings of the 1974 Allerton Conference on Circuit and Systems Theory, Urbana III}, 1974, 11 pp.



\bibitem{liu2021efficient} J.P. Liu, H. O.  Kolden, H. K.  Krovi, N. F.  Loureiro, K. Trivisa and A. W. Childs,
Efficient quantum algorithm for dissipative nonlinear differential equations,
{\em Proc. Natl. Acad. Sci. U.S.A.},
vol. 118(35), 2021, article no. e2026805118, 6 pp.

\bibitem{loparo1978estimating} K.
Loparo and G.  Blankenship,
Estimating the domain of attraction of nonlinear feedback systems,
{\em IEEE Trans. Automat. Contr.}, vol. 23(4), 1978, pp. 602--608.

\bibitem{minisini2007carleman}  J. Minisini, A.  Rauh and  E. P. Hofer,
Carleman linearization for approximate solutions of nonlinear control problems: Part 1--theory,
{\em Proc. of the 14th International Workshop on Dynamics and Control}, 2007, pp. 215--222.


\bibitem{moteesun2024} N. Motee and Q. Sun, 
Carleman-Fourier linearization of nonlinear real dynamical systems,
{\em Discrete and Continuous Dynamical Systems - B}, to appear,  Doi  10.3934/dcdsb.2025168




\bibitem{pruekprasert2022moment}  S.
Pruekprasert,  J.  Dubut, T.  Takisaka, C.  Eberhart  and  A. Cetinkaya,
Moment propagation through Carleman linearization with application to probabilistic safety analysis,
{\em Automatica}, vol. 160, 2024, paper no. 111441, 13 pp.


\bibitem{rauh2009carleman}  A. Rauh, J.  Minisini and H. Aschemann,
Carleman linearization for control and for state and disturbance estimation of nonlinear dynamical processes,
{\em IFAC Proc. Vol.},  vol. 42(13), 2009, pp. 455--460.

\bibitem{rotondo2022towards}  D.
Rotondo, G. Luta and J. H. U. Aarvag,
Towards a Taylor-Carleman bilinearization approach for the design of nonlinear state-feedback controllers,
{\em Eur. J. Control}, vol.  68, 2022, article no. 100670, 8 pp.


\bibitem{steeb1980non}
W. H. Steeb and  F. Wilhelm,
Non-linear autonomous systems of differential equations and Carleman linearization procedure,
{\em J. Math. Anal. Appl.}, vol. 77(2), 1980, pp. 601--611.  

\bibitem{surana2024} A. Surana, A. Gnanasekaran  and  T. Sahai,
An efficient quantum algorithm for simulating polynomial dynamical systems,  {\em Quantum Inf. Process.},  vol. 23(3), 2024, article no. 105, 22 pp. 


\bibitem{Vaszary2024} T. A. Vaszary, 
Carleman linearization of partial differential equations, {\em 
arXiv:2412.00014}, 2024.



\bibitem{WangJungersOng2023} Z.
Wang, R. M. Jungers and C. J. Ong, Computation of invariant sets via immersion for discrete-time nonlinear systems,
{\em Automatica}, vol. 147, 2023, article no. 110686, 9 pp.

\bibitem{Wu2024} H. C. Wu, J. Wang and  X. Li, Quantum algorithms for nonlinear dynamics: revisiting Carleman linearization with no dissipative conditions,  {\em 
SIAM J. Sci. Comput.}, vol. 47, 2025, pp. A943--A970.



	\end{thebibliography}
\end{document}